\title{Algorithmic aspects of left-orderings of solvable Baumslag--Solitar
groups via its dynamical realization}
\thanks{Part of this research was performed during Summer 2022 while Ho and Rossegger were visiting the Mathematical Sciences Research Institute (MSRI), now becoming the Simons Laufer Mathematical Sciences Institute (SLMath), which is supported by the National Science Foundation (Grant \#DMS-1928930).
Ho and Rossegger wish to thank the American Institute of Mathematics for stimulating this
research and providing the venue for the discussion of part of this work.
The work of Ho was partially supported by the National Science Foundation under Grant \#DMS-2054558.
The work of Le was partially supported by the AMS-Simons Travel Grant.
The work of Rossegger was supported by the European Union's Horizon 2020 Research and Innovation Programme under the Marie Sk\l{}odowska-Curie grant agreement No. 101026834 — ACOSE.}
\author{Meng-Che ``Turbo'' Ho}
\address{Department of Mathematics, California State University,
Northridge}
\email{turbo.ho@csun.edu} 
\urladdr{https://sites.google.com/view/turboho}
\author{Khanh Le}
\address{Department of Mathematics, Rice University}
\email{khanh.le@rice.edu}
\urladdr{https://sites.google.com/view/khanhqle}
\author{Dino Rossegger}
\address{Institute für Diskrete Mathematik und Geometrie, Technische Universit\"at
Wien}
\email{dino.rossegger@tuwien.ac.at}
\urladdr{https://drossegger.github.io}
\newcommand{\Elo}{\mathrel{E_{lo}}}
\newcommand{\E}{\mathrel{E}}
\newcommand{\Q}{\mathbb{Q}}
\newcommand{\R}{\mathbb{R}}
\newcommand{\Z}{\mathbb{Z}}
\newcommand{\N}{\mathbb{N}}
\DeclareMathOperator{\BS}{BS}
\newcommand{\Elobs}{\mathrel{E_{lo}^{\mathrm{BS}(1,n)}}}
\DeclareMathOperator{\Homeo}{Homeo}
\DeclareMathOperator{\Aff}{Aff}
\DeclareMathOperator{\Stab}{Stab}
\DeclareMathOperator{\Aut}{Aut}
\subjclass{20F60, 03C57, 03D45, 03E15}
\begin{document}

\maketitle
\begin{abstract}
  We answer a question of Calderoni and Clay~\cite{calderoni2023a} by showing
  that the conjugation equivalence relation of left orderings of the Baumslag-Solitar groups
  $\BS(1,n)$ is hyperfinite for any $n$. Our proof relies on a classification of
  $\BS(1,n)$'s left-orderings via its one-dimensional dynamical
  realizations. We furthermore use the effectiveness of the dynamical realizations of
  $\BS(1,n)$ to study algorithmic properties of the left-orderings on
  $\BS(1,n)$.
\end{abstract}

\section{Introduction}
A group $G$ is \emph{left-orderable} if there is a linear ordering $\prec$ on the
elements of $G$ such that
for all $f,g,h\in G$, $g\prec h$ implies $fg\prec fh$. We refer to such a linear ordering as a \emph{left-ordering} of $G$. The study of (left-)orderable groups has a long tradition in mathematics starting with the work of Dedekind and Hölder in the late 19th and early 20th century. Dedekind famously characterized the real numbers as a complete bi-orderable Abelian group and Hölder showed that any Archimedean ordered group is isomorphic to an additive subgroup of the reals with their standard ordering. These fundamental results led to an influx of interest in orderable groups and established their theory as a cornerstone of group theory; see~\cite{kopytov1996} for a treatment of the classical theory. 

While most studies of orderable groups employed algebraic methods, there is a strong connection with one-dimensional dynamics. Indeed, a group is left-orderable if and only if it acts faithfully on the real line by orientation preserving homeomorphisms~\cite{ghys2001} (see also \cref{sec:dynreal}). 
Motivated by this observation, Navas~\cite{navas2010} systematically applied dynamical ideas to study orderable groups and give new proofs of results previously obtained by algebraic methods, as well as new results. 
He gave a new dynamical proof of the fact that $\LO(F_n)$, the space of left-orderings of the non-abelian free group of rank $n$, is homeomorphic to the Cantor set if $n\geq 2$
\cite[Theorem A]{navas2010}, and of the result of Linnell which states that the number of left-orderings of a group is either finite or uncountable \cite[Theorem C]{navas2010}. Since then one-dimensional dynamics has become an important tool in orderable group theory with many applications. Generalizing Navas's approach, Rivas proved that for all left-orderable groups $G$ and $H$, the free-product $G*H$ has no isolated left-orderings \cite[Theorem A]{rivas2012}. In~\cite[Theorem 1.1]{MMRT2019} dynamics was used to find new examples of groups with isolated left-orderings, and in~\cite{rivas2010,RivasRomain16}  characterizations of left-orderings of various solvable groups were obtained.

These developments have led to various natural questions about the space of left-orderings of groups, $\LO(G)$. Of particular interest to this paper is a question by Deroin, Navas, and Rivas~\cite[Question 2.2.11]{deroin2016} that asks if the conjugation equivalence relation of $G$ on $\LO(G)$ is standard. This question has attracted the interest of Calderoni and Clay~\cite{calderoni2022} who initiated the study of the conjugation equivalence of orderings on a fixed group $G$ in the setting of descriptive set theory. 

Among other groups, Calderoni and Clay studied the space of linear orders of solvable Baumslag-Solitar groups \cite{calderoni2023a}. Baumslag-Solitar groups are introduced in \cite{baumslag1962} as an example of non-Hopfian groups, and have served as important examples and counterexamples in group theory \cite[Chapter 5]{bonanome18}. In particular, the solvable Baumslag-Solitar groups admit nice structural properties and thus provide useful test cases for theories and techniques. 

The main contribution of our paper is in this context: We show that the conjugation equivalence relation of the solvable Baumslag-Solitar group $BS(1,n)$ is hyperfinite for every $n$. This answers a question posed by Calderoni and Clay~\cite{calderoni2023a}. 

The algorithmic aspects of left-orderable groups have also seen attention in the past, mainly focusing on the complexity of orderings of computable groups~\cite{downey1986,harrison-trainor2018,darbinyan2020,darbinyan2022} and their reverse mathematics~\cite{solomon2001,solomon2002}. In \cref{sec:computability}, we explore how a group's dynamics can be used to study algorithmic properties of its orderings, using
$\BS(1,n)$ as an example. Our main result shows that the index sets of orderings that are conjugates of a given ordering with irrational base point is $\Sigma^0_3$-complete. Our proof relies on the dynamical realizations of $\BS(1,n)$ and the machinery developed in prior sections.

Before we prove the main results of this paper we review the main tools used in their proofs and give the necessary definitions to formally state our results.
\section{Left-orderable groups, their dynamical realizations, and $\Elo$}
A left ordering $\prec$ on a group $G$ induces a partition of $G$ into disjoint subsets 
\[P^+=\{g \in G \mid  g\succ id\},
P^{-}=(P^{+})^{-1}=\{g \in G \mid id\succ g\} \text{ and } \{id\}\] 
where $P^+$ is called the \emph{positive cone} of the left-ordering $\prec$. Notice that the reverse order $\prec^*$, defined as $g \prec^*h \text{ if and only if } h \prec g$, is also a left-ordering of $G$ with associated positive cone $P^-$. It
is not hard to see that the positive cones on $G$ are precisely the subsets $P\subseteq G$ satisfying
\[ P\cap P^{-1}=\varnothing,\, PP\subseteq P \text{ and } P\cup P^{-1}\cup
\{id\}=G.\]
Moreover, every positive cone gives rise to an associated left-ordering
$\prec_P$ via 
\[ g\prec_P h \iff g^{-1}h\in P\] 
and thus we get a bijection between positive cones and left-orderings on
$G$. 

The collection of all positive cones $P$ of $G$ forms a closed subspace, $\LO(G)$, via the subspace topology of $2^G$ and is thus a Polish
space \cite{sikora2004}. Given any positive cone $P\in \LO(G)$ and any element $g\in G$, the set $P^g=\{ p^g=g^{-1}pg: p\in P \}$ defines a positive cone on $G$. Consequently, the group $G$ acts on $\LO(G)$ via conjugation simply by defining $g(P) = P^{g^{-1}}$ for all $g\in G$ and $P \in \LO(G)$. 
It is not hard to see that the action of $G$ on $\LO(G)$ is continuous and, in fact, computable uniformly in $G$.
\begin{remark}
A countable group $G$ is \emph{computable} if its domain and group operation are computable. We can assume that the domain of $G$ is all of $\omega$ and thus view positive cones $P\subset G$ as subsets of the natural numbers and $2^G$ simply as $2^\omega$. Then the above comment that the action of $G$ on $\LO(G)$ is \emph{computable uniformly in $G$} means that there is a Turing operator $\Phi$ such that
\[ \Phi(G,P;g)=g(P) \text{ for all left-orderable groups $G$, positive cones $P$, and $g\in G$.} \]
\end{remark}


\subsection{Dynamical realizations}\label{sec:dynreal}
Although left-orderability is an algebraic concept, it has a deep connection to one-dimensional dynamics. In particular, the left-orderable countable group can be characterized in dynamical terms.
\begin{theorem}[{\cite[Theorem 6.8]{ghys2001}}]\label{thm:ghys}
  Let $G$ be a countable group. Then the following are equivalent:
  \begin{enumerate}
    \item $G$ is left-orderable.
    \item $G$ acts faithfully on the real line by orientation preserving homeomorphisms, i.e., there is a faithful representation $\gamma: G \to
    \mathrm{Homeo}_+(\mathbb R)$.
  \end{enumerate}
\end{theorem}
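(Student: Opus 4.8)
The plan is to prove the two implications separately. The direction $(2)\Rightarrow(1)$ extracts a left-ordering from a faithful action, while the harder direction $(1)\Rightarrow(2)$ is the \emph{dynamical realization} construction that builds an action out of an ordering. For $(2)\Rightarrow(1)$, suppose $\gamma\colon G\to\Homeo_+(\R)$ is faithful and fix a countable dense set $\{x_n:n\in\N\}\subseteq\R$. For $g\neq\mathrm{id}$, the homeomorphism $\gamma(g)$ is not the identity, so the open set $\{x:\gamma(g)(x)\neq x\}$ is nonempty and therefore contains some $x_n$; let $n(g)$ be the least such index. I would define $P=\{g\neq\mathrm{id}:\gamma(g)(x_{n(g)})>x_{n(g)}\}$ and verify the three positive-cone axioms. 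Disjointness of $P$ and $P^{-1}$ holds because $\gamma(g^{-1})=\gamma(g)^{-1}$ fixes exactly the same initial points and reverses the sign of the displacement at $x_{n(g)}$; the covering property $P\cup P^{-1}\cup\{\mathrm{id}\}=G$ follows from faithfulness together with density; and closure $PP\subseteq P$ follows from a short case analysis on the least index moved by a product $gh$, using throughout that each $\gamma(g)$ is increasing. By the bijection between positive cones and left-orderings recorded above, $P$ induces a left-ordering of $G$.

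For $(1)\Rightarrow(2)$, fix a left-ordering $\prec$ and an enumeration $G=\{g_0=\mathrm{id},g_1,g_2,\dots\}$. I would construct an order-embedding $t\colon(G,\prec)\to\R$ by greedy placement: set $t(g_0)=0$, and having placed $t(g_0),\dots,t(g_n)$, place $g_{n+1}$ at $(\text{current max})+1$ or $(\text{current min})-1$ if it is $\prec$-extremal among the already-placed elements, and at the midpoint of its immediate placed $\prec$-neighbors otherwise. Because $\prec$ is a left-ordering, $h_1\prec h_2\iff gh_1\prec gh_2$, so the assignment $\gamma(g)\colon t(h)\mapsto t(gh)$ is a well-defined order-preserving bijection of $t(G)$. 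Using that a nontrivial left-orderable group has no $\prec$-maximum or $\prec$-minimum, infinitely many extremal placements occur in each direction, so $t(G)$ is unbounded above and below and all complementary intervals of $\overline{t(G)}$ are bounded. I would then extend each $\gamma(g)$ to $\overline{t(G)}$ monotonically and across each complementary interval by the unique affine map matching its endpoints, obtaining an element of $\Homeo_+(\R)$. Faithfulness is immediate: if $g\neq\mathrm{id}$ then $\gamma(g)$ sends $0=t(\mathrm{id})$ to $t(g)\neq 0$.

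The main obstacle is to check that $g\mapsto\gamma(g)$ is a homomorphism \emph{after} the extension, that is, $\gamma(gh)=\gamma(g)\circ\gamma(h)$ on all of $\R$ rather than only on $t(G)$. On $t(G)$ this is the definitional identity $t(ghk)=\gamma(g)(t(hk))$, and it passes to $\overline{t(G)}$ by continuity. The delicate point is the behavior on a complementary interval $(a,b)$ of $\overline{t(G)}$: since $\gamma(h)$ carries $(a,b)$ affinely onto a complementary interval $(\gamma(h)(a),\gamma(h)(b))$, which $\gamma(g)$ then carries affinely onward, the composite $\gamma(g)\circ\gamma(h)$ is affine on $(a,b)$ with endpoints $\gamma(gh)(a)$ and $\gamma(gh)(b)$; but $\gamma(gh)$ is itself the affine map of $(a,b)$ with exactly those endpoints, and an affine map between two intervals matching endpoints is unique, so the two agree. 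Combined with continuity at the limit points of $t(G)$, this establishes that the extension respects composition and yields the desired faithful representation $\gamma\colon G\to\Homeo_+(\R)$.
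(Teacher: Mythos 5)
Your proposal follows the same route as the paper, which cites Ghys and recalls exactly this dynamical realization, and your direction $(2)\Rightarrow(1)$ is correct: the least-index case analysis does verify the positive-cone axioms (it is the ordering $P_D$ the paper describes right after the theorem). The problem is in $(1)\Rightarrow(2)$, at the step where you ``extend each $\gamma(g)$ to $\overline{t(G)}$ monotonically and across each complementary interval by the unique affine map matching its endpoints,'' and again where your homomorphism argument assumes that $\gamma(h)$ carries a complementary interval $(a,b)$ onto a complementary interval $(\gamma(h)(a),\gamma(h)(b))$. What actually needs proof is that the order isomorphism $t(h)\mapsto t(gh)$ of $t(G)$ extends to a continuous \emph{injection} of $\overline{t(G)}$ that permutes the complementary intervals. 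This is not a formality: it is false for a general order-embedding with unbounded image, so no argument using only the properties of $t$ that you ever invoke (order-preserving, unbounded in both directions) can establish it. For instance, take $G=\Q$ with its usual ordering and the order-embedding $t(q)=q$ for $q<\sqrt{2}$, $t(q)=q+1$ for $q>\sqrt{2}$. Then $\overline{t(\Q)}$ has the single complementary interval $(\sqrt{2},\sqrt{2}+1)$, and for $g=1$ the monotone extension of $t(q)\mapsto t(q+1)$ sends \emph{both} endpoints of that gap to $\sqrt{2}+2$; the affine extension over the gap is then constant, so $\gamma(1)$ is not a homeomorphism (dually, $\gamma(-1)$ would have to split a point into an interval). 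So the recipe you describe genuinely fails unless one uses the specific placement rule.

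The missing ingredient is a structural property of the greedy (midpoint/$\pm 1$) placement, proved by a ``halving'' argument your proposal never makes. Consider any partition $G=A\sqcup B$ with every element of $A$ $\prec$-below every element of $B$. At each stage let $a_s$ be the largest placed element of $A$ and $b_s$ the smallest placed element of $B$; no placed element lies strictly between them, so whenever a new element $\prec$-between $a_s$ and $b_s$ is enumerated, it is placed at the midpoint of $(t(a_s),t(b_s))$, halving that interval. If $A$ has no maximum or $B$ has no minimum, this happens infinitely often, so $\sup t(A)=\inf t(B)$. Consequently, for the greedy $t$ the complementary intervals of $\overline{t(G)}$ are exactly the intervals $(t(a),t(b))$ where $b$ is an immediate $\prec$-successor of $a$, and every cut whose sides have no extremal elements is realized as a single two-sided limit point. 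Since left multiplication by $g$ preserves immediate-successor pairs and such cuts, the extension of $\gamma(g)$ to $\overline{t(G)}$ is well defined, continuous, injective, and permutes the gaps; with that established, the remainder of your argument (affine extension, the endpoint-matching uniqueness argument for the homomorphism property, and faithfulness via the free orbit of $0$) goes through. In short, your outline is the right one and matches the paper's sketch, but the step you asserted without justification is precisely the crux of this implication.
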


Let us elaborate on \cref{thm:ghys}. Given an embedding $D$ of $G$ into $\mathrm{Homeo}_+(\mathbb R)$ and a dense sequence $(x_1,\dots)$
in $\mathbb R$, we can
obtain a positive cone $P_D = P_D(x_1, \dots)$ as follows: we define $g\in P_D$ if, for the least $i$ such that
$D(g)(x_i)$ is not a fixed point, $D(g)(x_i)>x_i$. The proof of the reverse implication that $G$ is left-orderable implies that $G$ embeds into $\mathrm{Homeo}_+(\mathbb R)$ is effective. In particular, given a left-ordering on $G$, there is an associated group action of $G$ on the real line, called a \emph{dynamical realization} of $G$, constructed as follows. Fix a left-ordering $\prec$ on $G$ and, since $G$ is countable, fix an enumeration of the elements of $G = (g_0,g_1,\dots)$. We define a map $t:G\to \mathbb{R}$ that preserves $\prec$, namely,
\[
t(g) < t(h) \iff g \prec h, 
\]
by defining $t:G\to \mathbb R$ inductively starting with $t(g_0)=0$ and 
\[ t(g_i)=\begin{cases}
  \max\{t(g_0),\dots t(g_{i-1})\}+1&\text{ if $(\forall j<i)g_{j} \prec g_i$}\\
  \min\{t(g_0),\dots t(g_{i-1})\}-1&\text{ if $(\forall j<i)g_{i} \prec g_j$}\\
  \frac{t(g_m)+t(g_n)}{2} & \text{ if $g_i\in (g_m,g_n)$, $m,n<i$ and $(\forall j<i)g_j\not\in (g_m,g_n)$}
\end{cases}\]
Then we can define an action of $G$ on $t(G)$ via $g(t(g_i))=t(gg_i)$ and extend this action continuously to the closure $\overline{t(G)}$. Finally, we extend the action to $\mathbb{R} \setminus \overline{t(G)}$ by affine maps to obtain a faithful orientation-preserving action of $G$ on $\mathbb R$ and a faithful representation $D: G \to \Homeo^+(\mathbb{R})$. By construction, this action 
\begin{itemize}
    \item has no global fixed point unless $G$ is the trivial group, and
    \item the orbit of $0$ is free; i.e., the stabilizer of $0$ under this action is trivial.
\end{itemize}
These two properties characterize the dynamical realization up to topological semiconjugacy. In particular, we have

\begin{lemma}[{\cite[Lemma 2.8]{navas2010}}]
    Let $\prec_1$ be a left-ordering on a non-trivial countable group $G$, and let $D_1$ be a dynamical realization of $\prec_1$. Let $D_2$ be an action of $G$ on $\mathbb{R}$ by orientation-preserving homeomorphisms such that 
    \begin{itemize}
        \item $D_2$ has no global fixed point, and
        \item the orbit of $0$ is free. 
    \end{itemize}
    If $\prec_2$ is a left-ordering on $G$ defined by 
    \[
    g \prec_2 h \Leftrightarrow D_2(g)(0) < D_2(h)(0),
    \]
    then the two left-orderings $\prec_1$ and $\prec_2$ coincide if and only if $D_2$ is \emph{topologically semiconjugate} to $D_1$ relative to $0$. That is, there exists a continuous non-decreasing surjective map $\varphi:\mathbb{R} \to \mathbb{R}$ such that $D_1(g)\circ \varphi = \varphi \circ D_2(g)$ for all $g \in G$ and that $\varphi(0) = 0$ \footnote{The condition that $\varphi$ fixes the origin was not stated in \cite{navas2010}, but was used in the proof as pointed out in \cite[Lemma 3.7]{mannrivas2018}}.
\end{lemma}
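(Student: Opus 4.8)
The plan is to prove both implications by tracking the two orbits of the base point $0$. Write $t_i(g)=D_i(g)(0)$ and $O_i=t_i(G)$ for $i=1,2$. For each $i$ the orbit of $0$ is free, so $g\mapsto t_i(g)$ is injective; moreover $g\prec_i h\iff t_i(g)<t_i(h)$ — for $i=2$ by the definition of $\prec_2$, and for $i=1$ because this is exactly how the dynamical realization encodes its ordering at the base point. Thus each $t_i$ is an order isomorphism $(G,\prec_i)\to(O_i,<)$. I also record that, since neither action has a global fixed point, each orbit $O_i$ is unbounded above and below: if $s=\sup_g t_i(g)$ were finite then $D_i(f)(s)=\sup_g t_i(fg)=s$ for every $f$, a global fixed point.

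For the easy direction ($\Leftarrow$), suppose $\varphi\colon\R\to\R$ is continuous, non-decreasing, surjective, equivariant, with $\varphi(0)=0$. Then for every $g$, $\varphi(t_2(g))=D_1(g)(\varphi(0))=t_1(g)$, so $\varphi$ carries $O_2$ onto $O_1$ respecting the labelling by $g$. If $g\prec_2 h$ then $t_2(g)<t_2(h)$, so $t_1(g)=\varphi(t_2(g))\le\varphi(t_2(h))=t_1(h)$; as the $D_1$-orbit is free and $g\ne h$ we get $t_1(g)\ne t_1(h)$, hence $t_1(g)<t_1(h)$, i.e.\ $g\prec_1 h$. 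Since both are total orders this forces $\prec_1=\prec_2$.

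For the hard direction ($\Rightarrow$), assume $\prec_1=\prec_2$ and build $\varphi$. First set $\varphi(t_2(g))=t_1(g)$ on $O_2$; because the orders agree, $t_2(g)<t_2(h)\iff t_1(g)<t_1(h)$, so this is a strictly increasing bijection $O_2\to O_1$. I would extend it to $\overline{O_2}$ by monotone limits, putting $\varphi(x)=\sup\{t_1(g):t_2(g)\le x\}$; this is finite (using unboundedness above and $\prec_1=\prec_2$ to bound the set) and agrees with the orbit definition, giving a non-decreasing map continuous along $\overline{O_2}$. The same formula is equivariant wherever defined: reindexing $g\mapsto fg$ and using that $D_1(f)$ is an increasing homeomorphism commuting with suprema yields $\varphi(D_2(f)(x))=\sup\{t_1(fg'):t_2(g')\le x\}=D_1(f)(\varphi(x))$, and $\varphi(0)=0$ as in the previous paragraph. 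It remains to extend $\varphi$ across the complementary intervals (gaps) of $\overline{O_2}$, and here the defining property of $D_1$ is essential: the midpoint rule in the construction of $t$ guarantees that a gap of $\overline{O_1}$ occurs \emph{exactly} at a jump of $(G,\prec_1)$, since at any cut approached from both sides the iterated bisection forces the $t_1$-values to a common limit. A gap $(a,b)$ of $\overline{O_2}$ is therefore of one of two kinds. If it comes from a jump $g_m\prec_1 g_n$, so $a=t_2(g_m)$, $b=t_2(g_n)$, then $\varphi(a)=t_1(g_m)<t_1(g_n)=\varphi(b)$ and $(\varphi(a),\varphi(b))$ is precisely the matching gap of $\overline{O_1}$, which I fill by mapping $[a,b]$ homeomorphically onto $[\varphi(a),\varphi(b)]$. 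Otherwise the gap arises from a two-sided cut, where $\varphi(a)=\varphi(b)$, and I collapse $[a,b]$ to that point. Performing this on one representative per $G$-orbit of gaps and propagating by $\varphi|_{D_2(f)[a,b]}=D_1(f)\circ\varphi|_{[a,b]}\circ D_2(f)^{-1}$ keeps $\varphi$ equivariant; the propagation is consistent because freeness of the orbits makes the setwise stabilizer of each gap trivial (a $D_2(f)$ preserving $(t_2(g_m),t_2(g_n))$ fixes its endpoints, forcing $fg_m=g_m$ and $f=id$). The resulting $\varphi$ is non-decreasing with image $\overline{O_1}$ together with all filled jump-gaps, i.e.\ all of $\R$, so it is surjective, and a surjective non-decreasing self-map of $\R$ is continuous.

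The main obstacle is the gap analysis in this last step: arranging $\varphi$ to be simultaneously continuous, surjective, and equivariant. The crux is the structural fact that the dynamical realization is \emph{gap-minimal} — its orbit closure has gaps only at order-jumps — so that the possibly larger family of gaps produced by the arbitrary action $D_2$ splits cleanly into those that must be filled homeomorphically and those that must be collapsed; ensuring the equivariant fillings agree across each $G$-orbit of gaps, which I handle through triviality of the gap-stabilizers, is the remaining technical point.
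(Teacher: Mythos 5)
The paper does not actually prove this lemma: it is imported verbatim from Navas \cite[Lemma 2.8]{navas2010}, with the footnote crediting \cite[Lemma 3.7]{mannrivas2018} for the observation that $\varphi(0)=0$ must be added to the statement. So there is no in-paper argument to compare against; what can be said is that your proof is correct and is essentially the standard one from the literature: recover both orderings at the base point, define $\varphi$ on the orbit by $t_2(g)\mapsto t_1(g)$, extend by suprema, and then use the key structural feature of the dynamical realization --- that the midpoint rule makes its orbit closure have gaps exactly at order-jumps --- to decide which gaps of $\overline{O_2}$ are filled homeomorphically and which are collapsed, with equivariance of the fillings secured by triviality of gap stabilizers and continuity obtained from monotonicity plus surjectivity. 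Two small points of precision: your gap dichotomy (``jump'' versus ``two-sided cut'') omits the one-sided cuts, where exactly one endpoint of the gap lies in the orbit; these are handled correctly by your operational rule, since the sup formula still gives $\varphi(a)=\varphi(b)$ there, but the bisection argument behind gap-minimality must also cover this one-sided case (new extreme elements bisect toward the orbit point on the other side of the cut), not only cuts approached from both sides. Likewise, the surjectivity claim $\varphi(\overline{O_2})\supseteq\overline{O_1}$ deserves a line of justification (it again uses gap-minimality to rule out a limit point of $O_1$ sitting opposite an isolated orbit point of $O_2$), though it follows routinely from the machinery you set up.
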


\begin{remark}
    Topological semiconjugacy is not an equivalence relation as $\varphi$ could collapse some intervals to points. The lemma above says that among all actions satisfying the two conditions, the dynamical realization is ``minimal''.
\end{remark}

It follows that a different choice of enumeration of $G$ yields a different action that is topologically semiconjugate to $D$. Therefore, we may speak of the dynamical realization of a left-ordering on $G$ without referencing the enumeration of $G$. From the dynamical realization, we can also recover the original positive cone $P$ on $G$ and its conjugate.

\begin{proposition}
    Suppose that $D$ is a dynamical realization of a left-ordering $P$ of $G$. Let $t :G \to \mathbb{R}$ be the order-preserving map used in the construction of $D$. Then for any $h \in G$, we have
    \[
    P^h = \{g \in G: D(g)(t(h^{-1})) > t(h^{-1})\}.
    \]
\end{proposition}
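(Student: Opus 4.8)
The plan is a direct unwinding of the three definitions involved: the conjugate positive cone, the order-preserving map $t$, and the action $D$. First I would rewrite membership in the conjugate cone. Since $P^h=\{h^{-1}ph : p\in P\}$, an element $g\in G$ lies in $P^h$ if and only if $hgh^{-1}\in P$, i.e.\ $hgh^{-1}\succ id$, where $\prec$ is the left-ordering with positive cone $P$.

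Next I would simplify the right-hand side. Because $h^{-1}\in G$, the point $t(h^{-1})$ lies in the orbit $t(G)$, so the defining property of the dynamical realization applies directly and gives the exact equality $D(g)(t(h^{-1}))=t(gh^{-1})$; no appeal to the continuous extension to $\overline{t(G)}$ or to the affine extension off the orbit is needed, since we evaluate at a genuine orbit point. As $t$ is order-preserving, the inequality $D(g)(t(h^{-1}))>t(h^{-1})$, that is $t(gh^{-1})>t(h^{-1})$, is then equivalent to $h^{-1}\prec gh^{-1}$.

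Finally I would connect the two conditions. Using the characterization $a\prec_P b\iff a^{-1}b\in P$ from the construction of $\prec_P$, the condition $h^{-1}\prec gh^{-1}$ is equivalent to $(h^{-1})^{-1}(gh^{-1})=hgh^{-1}\in P$, which is exactly the condition isolated in the first step. (Equivalently, one may argue by left-invariance: applying left-multiplication by $h^{-1}$ to $id\prec hgh^{-1}$ yields $h^{-1}\prec gh^{-1}$, and this step is reversible by left-multiplication by $h$.) Hence the two sets coincide, as claimed.

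The argument is a definition chase with no substantive obstacle. The only points requiring genuine care are (i) tracking the direction of conjugation correctly, so that $g\in P^h$ corresponds to $hgh^{-1}\in P$ rather than its inverse, and (ii) observing that $t(h^{-1})\in t(G)$, so that the clean formula $D(g)(t(h^{-1}))=t(gh^{-1})$ holds without invoking how the action is extended beyond the orbit of $0$.
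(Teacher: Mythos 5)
Your proof is correct and takes essentially the same approach as the paper's: both are definition chases using the identical ingredients, namely the formula $D(g)(t(x))=t(gx)$ at orbit points, the order-preservation of $t$, and left-invariance of $\prec_P$. The only cosmetic difference is organizational—the paper first records the case $h=id$ and then reduces the general case to it by conjugation, whereas you unwind both sides directly to the condition $hgh^{-1}\in P$.
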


\begin{proof}
    When $h = id$, we have
    \[
    P = \{g \in G: D(g)(t(id)) > t(id)\}
    \]
    since $t$ is an order-preserving map. In general, we have
    \[
    \begin{aligned}
    P^h &= \{h^{-1}gh: g \in P\} \\
        &= \{h^{-1}gh: D(g)(t(id)) > t(id) \} \\
        &= \{f \in G: D(hfh^{-1})(t(id)) > t(id) \} \\
        &= \{f \in G: D(f)(t(h^{-1})) > t(h^{-1}) \}. 
    \end{aligned}
    \]
\end{proof}

An application of the dynamical realization that is useful for this paper is an effective classification of the left-orderings on the solvable Baumslag--Solitar group $\BS(1,n)$ \cite[Theorem 4.2]{rivas2010}. We will review this classification in \cref{sec:hypfinite}.

\subsection{Descriptive set theory and $\Elo$}

For a fixed group $G$, the conjugation action of $G$ on $\LO(G)$
defines an orbit equivalence relation, denoted $\E_{lo}^G$, where
 \[P\E^G_{lo} Q \Leftrightarrow \exists g\in G, g(P)=Q.\]
When $G$ is countable, every equivalence class of $\E_{lo}^G$ (or just $\Elo$ if $G$ is clear from
context) is countable. 

Equivalence relations where each equivalence class is countable are called
\emph{countable equivalence relations} and are a major topic in descriptive set
theory, where the complexity of equivalence relations is measured using Borel reducibility $\leq_B$. The structure of the
quasi-order of countable Borel equivalence relations under Borel reducibility is complicated and its
investigation is an active research area,
see~\cite{kechris2019} for an overview of developments. Let us
mention three benchmark equivalence relations:
\begin{enumerate}
  \item The \emph{identity relation} on $2^\omega$, $id^{2^\omega}$, is the least
  complicated equivalence relation among countable equivalence relations on
  uncountable spaces. The equivalence relations reducible to $id^{2^\omega}$ are
  called \emph{smooth}.
  \item The equivalence relation of eventual equality on $2^\omega$, 
    \[ x\E_0 y \iff \exists m (\forall n>m) x(n)=y(n)\]
    is the archetypical non-smooth \emph{hyperfinite} equivalence relation (i.e., an
    increasing union of equivalence relations having only finite classes). By
    \cite{harrington1990} every
    hyperfinite equivalence relation is either bi-reducible with $\E_0$ or
    smooth.
\item The orbit equivalence relation $\mathbin S$ of the shift action of $F_2$ on $2^{F_2}$ is \emph{universal} for countable Borel 
  equivalence relations, i.e., every other countable Borel equivalence relation
  is Borel reducible to it.
\end{enumerate}
While the interval $(id^{2^\omega}, \E_0)$ is trivial, the interval between
$\E_0$ and $S$ is known to be extremely complicated.

A fundamental result due to Feldman and Moore~\cite{feldman1977} shows that the
countable equivalence relations are precisely the orbit equivalence relations of
Borel actions of countable groups. One major conjecture in the area, known as
Weiss's conjecture, aims to shed light on which groups cannot have complicated
orbit equivalence relations. It states that every Borel action of an amenable
group has a hyperfinite orbit equivalence relation.
So far this conjecture has not been fully confirmed and only partial results are
known with the latest advance made in~\cite{conley2023}. 

Deroin, Navas, and Rivas \cite{deroin2016} asked if $\LO(G)$ modulo the action of $G$ is standard, which is equivalent to asking if $\E_{lo}^G$ is smooth. Calderoni and Clay generalized this to studying the complexity of equivalence relations $\E_{lo}^G$ under Borel reductions \cite{calderoni2022,calderoni2023a,calderoni2023}. They showed that $\E_{lo}$ is universal for free products of countable left-orderable groups and not smooth for many groups, including the Baumslag--Solitar group $\BS(1,n)$ and the Thompson's group $F$. They also showed that $E_{lo}^{\BS(1,2)}$ is hyperfinite. It is still open if there is a group $G$ with $\E_{lo}^G$ being \emph{intermediate}, namely, strictly between $E_0$ and $S$. There are also other closely related orbit equivalence relations, including the action of $\Aut(G)$ on the Archimedean ordering of $\Z^n$ or $\Q^n$. In \cite[Theorem 1.1]{CMMS2023}, it was shown that the orbit equivalence relation of $\Aut(\mathbb{Q}^2)$ on the space of Archimedean orderings of $\mathbb{Q}^2$ is not smooth. Extending this result, Poulin showed that the action of $\Aut(\mathbb{Q}^n)$ on $\LO(\mathbb{Q}^n)$ is not hyperfinite when $n\geq 3$ \cite[Corollary 1.3]{poulin2024}.

In the next section, we use the affine action of $\BS(1,n)$ on $\mathbb{R}$ to show that the $\Elobs$
is hyperfinite for every $n$ (Theorem \ref{thm:hypfinite}), answering a question of
Calderoni and Clay~\cite[Question 4.2] {calderoni2023}. 
While this result does follow from
Weiss's conjecture, the conjecture has not been confirmed for $\BS(1,n)$, $n>2$. In any case, we believe that our proof is of interest as it is quite elementary compared to known proofs of parts of Weiss's conjecture. Furthermore, the result determines exactly the complexity of the conjugation action of $\BS(1,n)$ on $
\LO(\BS(1,n))$ which is of interest from the orderable group theory perspective. 


\section{$\Elobs$ is hyperfinite}\label{sec:hypfinite}



The solvable Baumslag--Solitar group $\BS(1,n)$, given by the presentation
\[ \langle a,b: b^{-1}ab=a^n\rangle,\]
is an important example in group theory. 
The normal closure $\langle \langle a \rangle \rangle$ of $a$ is abstractly isomorphic to $\mathbb{Z}[1/n]$, the subgroup of $\Q$ generated by $1/n, 1/n^2, 1/n^3, \dots$, via an isomorphism sending $a$ to $1$ and $b^{k}ab^{-k}$ to $1/n^k$ for every $k \in \Z$. Abusing notation, we will write elements of $\langle\langle a \rangle\rangle$ as $a^r$ where $r \in \mathbb{Z}[1/n].$ The quotient of $\BS(1,n)$ by the normal closure of $a$ is the infinite cyclic group generated by the image of $b$. Therefore, $\BS(1,n)$ fits into a (split) short exact sequence
\[
0 \to \mathbb{Z}[1/n] \to \BS(1,n) \to \mathbb{Z} \to 0,
\]
and admits the semidirect product structure $\BS(1,n) = \langle\langle a \rangle\rangle \rtimes \langle b \rangle$. The elements of $G$ have the normal forms $a^rb^s$ where $r \in \mathbb{Z}[1/n]$ and $s \in \mathbb{Z}$. As a warm-up, we first recall a well-known construction of left-orderings using a short exact sequence. 

\begin{proposition}
\label{prop:OrderingFromSES}
    Let $K$ and $H$ be left-orderable groups equipped with positive cones $P_K \subset K$ and $P_H \subset H$. Consider the following short exact sequence of groups:
    \[
    1 \to K \to G \xrightarrow{\pi} H \to 1
    \]
    The set $P_G := \{ g \in G \mid \pi(g) \in P_H\} \cup P_K$ defines a positive cone of $G$.
\end{proposition}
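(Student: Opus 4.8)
The plan is to verify directly that $P_G$ satisfies the three defining conditions of a positive cone recalled in the introduction, namely $P_G \cap P_G^{-1} = \varnothing$, $P_G P_G \subseteq P_G$, and $P_G \cup P_G^{-1} \cup \{id\} = G$. Before splitting into cases I would record one structural observation that streamlines everything. Identifying $K$ with $\ker\pi$, an element $g$ lies in $K$ precisely when $\pi(g) = id$, and since $id \notin P_H$ such a $g$ belongs to $P_G$ if and only if $g \in P_K$. Conversely, if $\pi(g) \neq id$ then $g \notin K$, hence $g \notin P_K$, so $g \in P_G$ if and only if $\pi(g) \in P_H$. Thus membership in $P_G$ is governed by a clean dichotomy: either $g$ is \emph{$H$-positive} with $\pi(g) \in P_H$, or $g$ is \emph{kernel-positive} with $g \in P_K$, and these two types are mutually exclusive. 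I would also use throughout that $\pi(g^{-1}) = \pi(g)^{-1}$, since $\pi$ is a homomorphism.

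With this dichotomy in hand, the totality condition $P_G \cup P_G^{-1} \cup \{id\} = G$ follows by splitting on $\pi(g)$: if $\pi(g) \in P_H$ then $g \in P_G$; if $\pi(g) \in P_H^{-1}$ then $\pi(g^{-1}) \in P_H$, placing $g \in P_G^{-1}$; and if $\pi(g) = id$ then $g \in K$, where the decomposition $K = P_K \cup P_K^{-1} \cup \{id\}$ sends $g$ into $P_G$, into $P_G^{-1}$, or to $id$. The disjointness condition is equally short: if $g \in P_G \cap P_G^{-1}$ and $\pi(g) \neq id$, then $g$ and $g^{-1}$ are both $H$-positive, forcing $\pi(g), \pi(g)^{-1} \in P_H$ against $P_H \cap P_H^{-1} = \varnothing$; while if $\pi(g) = id$ then $g, g^{-1} \in P_K$ contradicts $P_K \cap P_K^{-1} = \varnothing$.

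The substance of the argument, and the step I expect to require the most care, is closure $P_G P_G \subseteq P_G$. Given $g, h \in P_G$ I would compute $\pi(gh) = \pi(g)\pi(h)$ and run through the four cases of the dichotomy applied to $g$ and $h$. If both are kernel-positive then $gh \in K$ and $P_K P_K \subseteq P_K$ gives $gh \in P_K \subseteq P_G$. In the two mixed cases exactly one factor lies in $K$ and so projects to $id$, while the other projects into $P_H$; thus $\pi(gh) \in P_H$ and $gh \in P_G$. If both are $H$-positive then $\pi(gh) \in P_H P_H \subseteq P_H$, so again $gh \in P_G$. The only subtlety to flag is to confirm that these cases are exhaustive and that the ``mixed'' and ``both $H$-positive'' cases never force $gh$ into $K$ with the wrong sign; this is automatic because in each of those cases we have already shown $\pi(gh) \in P_H$, which excludes $\pi(gh) = id$.

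Conceptually, $P_G$ is nothing but the lexicographic positive cone that orders elements first by their image in $H$ and breaks the remaining ties (the elements of $K$) using $P_K$, so all three axioms descend from the corresponding axioms for $P_H$ and $P_K$ together with the homomorphism property of $\pi$. I do not anticipate any genuine obstacle beyond the bookkeeping of the case split in the closure step; in particular, normality of $K$ is not needed beyond its being the kernel of $\pi$.
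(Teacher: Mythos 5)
Your proof is correct. Note that the paper itself offers no proof of this proposition---it is stated as a recollection of a well-known construction---so there is no argument to compare against; your direct verification of the three positive-cone axioms (disjointness, closure under products, and totality), organized via the dichotomy between elements of $\ker\pi$ and elements with nontrivial image in $H$, is the standard argument and is carried out without gaps. Your closing remark is also correct and worth emphasizing: since the three set-theoretic axioms characterize positive cones of \emph{left}-orderings, no conjugation-invariance of $P_K$ under $G$ is needed (that would only be required to produce a bi-ordering), which is consistent with the paper's later observation that $\BS(1,n)$ carries exactly four bi-orderings arising this way while the construction itself works in full generality.
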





To order $\BS(1,n)$, we observe that there are exactly two orderings on $\mathbb{Z}[1/n]$ and $\mathbb{Z}$: the ordering coming from the standard ordering on $\mathbb{R}$ and its reversal. Applying \cref{prop:OrderingFromSES}, we get four left-orderings on $\BS(1,n)$. The positive cones of these four left-orderings are:
\begin{itemize}
    \item $P_\infty^{++} = \{a^rb^s:s>0 \lor (s = 0 \land r > 0)\}$,
    \item $P_\infty^{+-} = \{a^rb^s:s>0 \lor (s = 0 \land r < 0)\}$,
    \item $P_\infty^{-+} = \{a^rb^s:s<0 \lor (s = 0 \land r > 0)\}$,  
    \item $P_\infty^{--} = \{a^rb^s:s<0 \lor (s = 0 \land r < 0)\}$,
\end{itemize}
We note that all four left-orderings above are conjugation invariant. In other words, they are all bi-orderings on $\BS(1,n)$ and the action of $\BS(1,n)$ on $\LO(\BS(1,n))$ fixes these four bi-orderings. However, $\BS(1,n)$ admits other left-orderings not coming from Proposition \ref{prop:OrderingFromSES}. To study $\Elobs$, we will review the classification of all left-orderings on $\BS(1,n)$ for any integer $n \geq 2$ due to Rivas~\cite[Theorem 4.2]{rivas2010}. Note that although Rivas stated the classification of left-orderings only for $\BS(1,2)$, his proof works without modification for any positive integer $n$. The proof for arbitrary positive integers $n$ is also presented in~\cite{deroin2016}. 

A different source of left-orderings on $\BS(1,n)$ comes from the affine action of $\BS(1,n)$ on $\mathbb{R}$. Consider the action of $\BS(1,n)$ on $\mathbb{R}$ given by $\rho:\BS(1,n) \to \Aff^+(\mathbb{R})$ where
\[
\rho(a)(x) = x+1 \quad \text{and} \quad \rho(b)(x) = x/n.
\]
It is a straightforward computation that this action is faithful. 

\begin{lemma}\cite[page 10]{rivas2010}
Let $x \in \mathbb{R}$.  If $x \in \mathbb{Q}$, then the stabilizer $\Stab_\rho(x) \cong \mathbb{Z}$. If $x$ is not in $\mathbb{Q}$, then the stabilizer $\Stab_\rho(x)$ is trivial.  
\end{lemma}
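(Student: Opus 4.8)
The plan is to understand the affine action $\rho$ concretely and compute stabilizers directly from the normal form $a^r b^s$. The key observation is that an arbitrary element $a^r b^s$ acts on $\mathbb{R}$ by an explicit affine map, which I would compute first. Since $\rho(a)(x) = x+1$ and $\rho(b)(x) = x/n$, an induction on $s$ gives that $\rho(b^s)(x) = x/n^s$, and then $\rho(a^r b^s)(x) = \rho(a^r)(x/n^s) = x/n^s + r$. Thus every element of $\BS(1,n)$ acts as the affine map $x \mapsto x/n^s + r$, with slope $n^{-s}$ and translation part $r \in \mathbb{Z}[1/n]$.

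\smallskip

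First I would fix $x \in \mathbb{R}$ and ask which elements $a^r b^s$ fix $x$, i.e.\ solve $x/n^s + r = x$, equivalently $r = x(1 - n^{-s}) = x\,(n^s - 1)/n^s$. I would split into the two cases according to the slope. If $s = 0$, the condition becomes $r = 0$, so the only fixing element with trivial $b$-part is the identity. If $s \neq 0$, then $n^s \neq 1$, so the equation determines $x = r n^s/(n^s - 1)$ uniquely in terms of $r$ and $s$; in particular a nontrivial fixing element forces $x$ to be rational, since $r \in \mathbb{Z}[1/n] \subset \mathbb{Q}$ and $n^s/(n^s-1) \in \mathbb{Q}$.

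\smallskip

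From this dichotomy both claims follow. For the irrational case, if $x \notin \mathbb{Q}$ then no element with $s \neq 0$ can fix $x$ (such an element would force $x$ rational), and the only element with $s = 0$ that fixes $x$ is the identity; hence $\Stab_\rho(x)$ is trivial. For the rational case, I would exhibit a generator. The natural candidate is an element of the form $a^r b^s$ with the smallest positive $s$ that can fix $x$: given $x = p/q \in \mathbb{Q}$ in lowest terms, I would determine for which $s$ there exists $r \in \mathbb{Z}[1/n]$ with $r = x(n^s-1)/n^s$, and show the set of such $s$ is a nonzero subgroup of $\mathbb{Z}$, hence of the form $s_0 \mathbb{Z}$ for some $s_0 > 0$. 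The stabilizer then consists exactly of the elements $a^{r(s)} b^{s}$ for $s \in s_0\mathbb{Z}$, where $r(s) = x(n^s-1)/n^s$ is forced; mapping $a^{r(s_0)} b^{s_0}$ to $1 \in \mathbb{Z}$ gives the isomorphism $\Stab_\rho(x) \cong \mathbb{Z}$, once I check this element has infinite order and generates.

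\smallskip

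The main obstacle is the rational case, specifically verifying that the set of valid $s$ is all of $s_0\mathbb{Z}$ and that the putative generator exhausts the stabilizer rather than just sitting inside it. The subtlety is the interplay between the denominator $q$ of $x$ and the powers of $n$: for a given $x = p/q$, one must check that $x(n^s - 1)/n^s$ actually lands in $\mathbb{Z}[1/n]$, which depends on how $q$ relates to $n$. I would handle this by noting $n^s - 1$ and $n^s$ are coprime, so the condition $x(n^s-1)/n^s \in \mathbb{Z}[1/n]$ reduces to a divisibility condition on $q$ (essentially that the part of $q$ coprime to $n$ divides $n^s - 1$), which is periodic in $s$ by a standard order-of-$n$-modulo-$q'$ argument and thus cuts out a subgroup $s_0\mathbb{Z}$. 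Closure of the stabilizer under the group operation then confirms it is exactly this cyclic group.
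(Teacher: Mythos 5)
Your proposal is correct, and it follows the paper's skeleton up to the final step: both arguments compute the affine form $\rho(a^rb^s)(x)=n^{-s}x+r$, both settle the irrational case by noting that a stabilizing element with $s\neq 0$ forces $x$ to be its unique, necessarily rational, fixed point, and both rest on the same number-theoretic fact that $n$ has finite multiplicative order modulo the part of the denominator of $x$ coprime to $n$. The divergence is in how $\Stab_\rho(x)\cong\Z$ is obtained for rational $x$. The paper first proves an abstract dichotomy: $\langle\langle a\rangle\rangle$ acts by translations without fixed points, so it meets every stabilizer trivially, whence every stabilizer injects into $\BS(1,n)/\langle\langle a\rangle\rangle\cong\Z$ and is trivial or infinite cyclic; the rational case then needs only one explicitly constructed nontrivial stabilizing element. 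You instead determine the stabilizer completely: writing $x=p/q$ in lowest terms, $r$ is forced by $s$ via $r=x(n^s-1)/n^s$, and the admissible exponents form the kernel of $s\mapsto n^s$ in the unit group modulo the prime-to-$n$ part of $q$, hence a nonzero subgroup $s_0\Z$. The paper's route buys brevity---no need to check that the admissible exponents form a subgroup or that a single element generates---while yours buys an explicit description of the whole stabilizer and its generator; moreover, the exhaustion issue you flag as the main obstacle is already resolved by your own observation: since $r$ is determined by $s$, the $b$-exponent projection is injective on the stabilizer, and that projection is exactly the paper's quotient map in concrete form. One small point in your favor: working with the prime-to-$n$ part of $q$ is the correct normalization; the paper's $q'=q/\gcd(q,n)$ need not be coprime to $n$ (take $q=4$, $n=2$), a harmless and easily repaired slip that your setup avoids.
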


\begin{proof}
First, we observe that the stabilizer of any point must be either trivial or isomorphic to $\mathbb{Z}$. Under $\rho$, the normal closure of $a$ acts by translation and has no fixed points. Therefore, if the stabilizer of some point on $\mathbb{R}$ is nontrivial, it must be mapped injectively into the quotient $\BS(1,n)/\langle\langle a \rangle \rangle \cong \mathbb{Z}$. Since $\rho$ is a faithful representation, the nontrivial stabilizer must be isomorphic to $\mathbb{Z}.$

For any $r\in \mathbb{Z}[1/n]$ and $s\in \mathbb{Z}$, we have
\[
\rho(a^rb^s)(x) = n^{-s}x + r.
\]
If $s \neq 0$, then the affine map above has a fixed point which must be rational. Now suppose that $x = p/q \in \mathbb{Q}$. We want to find $r \in \mathbb{Z}[1/n]$ and $s\in \mathbb{Z}$ such that 
\[
n^{s}\frac{p}{q}+r = \frac{p}{q}
\]
Let $q = dq'$ and $p'=np/d$ where $d = \gcd(q,n)$. The previous equation is equivalent to
\[
n^{s} \frac{np}{dq'} + nr = \frac{np}{ dq'} \quad \text{or}\quad (n^{s} -1) \frac{p'}{q'} = - nr 
\]
Since $n$ and $q'$ are relatively prime, $q'$ is in the (multiplicative) group of units of $\Z/n\Z$, so we can find $s\in\mathbb{N}\setminus\{0\}$ such that $q'$ divides $n^s-1$. Now we set
\[
r = -\frac{n^s -1}{q'}\frac{p'}{n} \in \mathbb{Z}[1/n]
\]
since $p' \in \mathbb{Z}$. Therefore $\rho(a^rb^{-s})$ fixes $p/q$. 
\end{proof}

It follows that if $\varepsilon \in \mathbb{R}\setminus\mathbb{Q}$, then each of the following subsets defines a positive cone on $\BS(1,n)$:

\begin{itemize}
    \item $P_\varepsilon^+ = \{g:\rho(g)(\varepsilon) > \varepsilon \}$,
    \item $P_\varepsilon^- = \{g:\rho(g)(\varepsilon) < \varepsilon \}$
\end{itemize}
When $\varepsilon\in \mathbb{Q}$, we define the following four positive cones.
\begin{itemize}
    \item $Q_\varepsilon^{++} = \{g:(\rho(g)(\varepsilon) > \varepsilon) \lor (\rho(g)(\varepsilon) = \varepsilon \land  \rho(g)(\varepsilon+1) > \varepsilon + 1 ) \}$
    \item $Q_\varepsilon^{+-} = \{g:(\rho(g)(\varepsilon) > \varepsilon) \lor (\rho(g)(\varepsilon) = \varepsilon \land  \rho(g)(\varepsilon+1) < \varepsilon + 1 ) \}$
    \item $Q_\varepsilon^{-+} = \{g:(\rho(g)(\varepsilon) < \varepsilon) \lor (\rho(g)(\varepsilon) = \varepsilon \land  \rho(g)(\varepsilon+1) > \varepsilon + 1 ) \}$
    \item $Q_\varepsilon^{--} = \{g:(\rho(g)(\varepsilon) < \varepsilon) \lor (\rho(g)(\varepsilon) = \varepsilon \land  \rho(g)(\varepsilon+1) < \varepsilon + 1 ) \}$
\end{itemize}

\begin{theorem}[{\cite[Theorem 4.2]{rivas2010}}, \cite{deroin2016}]\label{thm:rivas}
$P_\varepsilon^+$ and $P_{\varepsilon}^-$ for $\varepsilon \in \R\setminus\Q$, $Q_\varepsilon^{++}$, $Q_\varepsilon^{+-}$, $Q_\varepsilon^{-+}$, and $Q_\varepsilon^{--}$ for $\varepsilon \in \Q$, and the 4 positive cones $P_{\infty}^{++}$, $P_{\infty}^{+-}$, $P_{\infty}^{-+}$, and $P_{\infty}^{--}$ corresponding to bi-orderings are all distinct and contain all the left-orderings on $\BS(1,n)$. 
\end{theorem}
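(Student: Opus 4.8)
The plan is to split the statement into two tasks: showing the listed cones are pairwise distinct, and showing they exhaust $\LO(\BS(1,n))$. Distinctness is the more elementary half, and I would dispatch it first using the geometry of $\rho$. The four bi-orderings $P_\infty^{\pm\pm}$ are separated by recording whether $b$ or $b^{-1}$, and whether $a$ or $a^{-1}$, lies in the cone. To separate two affine cones with distinct base points $\varepsilon < \varepsilon'$, I would use that an affine map $x \mapsto n^{-s}x + r$ with $s \neq 0$ has a unique fixed point $r/(1-n^{-s})$ and that such fixed points are dense in $\R$. Choosing such a $g$ whose fixed point lies strictly between $\varepsilon$ and $\varepsilon'$ and which contracts toward that point yields $\rho(g)(\varepsilon) > \varepsilon$ while $\rho(g)(\varepsilon') < \varepsilon'$, so $g$ witnesses $P_\varepsilon^+ \neq P_{\varepsilon'}^+$ and simultaneously separates the sign- and $Q$-variants. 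The remaining separations (the two signs for a fixed $\varepsilon$, the four $Q$-variants, and affine versus bi-orderings) follow similarly, noting that the bi-orderings are exactly the conjugation-invariant cones while the affine ones are genuinely moved by conjugation.

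For completeness, I would fix an arbitrary left-ordering $\prec$ with positive cone $P$ and pass to its dynamical realization $D \colon \BS(1,n) \to \Homeo^+(\R)$, normalized so that the orbit of $0$ is free and there is no global fixed point. The strategy is to recognize $D$, up to topological semiconjugacy relative to $0$, as the affine action $\rho$ seen from a suitable base point; by Navas's semiconjugacy lemma quoted above this then pins down $\prec$. The engine is the normal abelian subgroup $N = \langle\langle a \rangle\rangle \cong \Z[1/n]$. Since $\Z[1/n]$ is a rank-one subgroup of $\Q$, it carries exactly two orderings, both Archimedean, so $\prec$ restricts on $N$ to the standard ordering or its reverse; replacing $\prec$ by $\prec^*$ if necessary, I would assume it is standard, the reversal merely permuting the final list. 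The proof then branches on whether $D|_N$ is fixed-point-free or not.

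The fixed-point-free case is where the real work lies and is the step I expect to be the main obstacle. Here $N$ acts with no global fixed point, and Hölder's theorem applied to the Archimedean action of $N$ yields a monotone semiconjugacy carrying $D|_N$ to the translation action $a^r \mapsto (x \mapsto x + cr)$ for some $c > 0$; after rescaling, $N$ acts by the standard translations of $\Z[1/n]$, exactly as under $\rho$. The delicate point is to pin down $b$: the relation $b a^r b^{-1} = a^{r/n}$ forces $D(b)$ to conjugate translation by $r$ to translation by $r/n$ on the minimal set, and a homeomorphism effecting this must agree there with an affine contraction $x \mapsto x/n + c$, which is $\rho(b)$ up to conjugating by a translation. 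Thus the $D$-orbit of $0$ is carried to the $\rho$-orbit of a point $\varepsilon \in \R$. I would then read off the cone from $\varepsilon$: if $\varepsilon \notin \Q$ then $\Stab_\rho(\varepsilon)$ is trivial, the comparison of $\rho(g)(\varepsilon)$ against $\varepsilon$ is strict for all $g \neq id$, and $\prec$ equals $P_\varepsilon^+$ or $P_\varepsilon^-$; if $\varepsilon \in \Q$ then $\Stab_\rho(\varepsilon) \cong \Z$ is nontrivial and must be broken using a second point, for which $\varepsilon + 1$ works since an affine map fixing both $\varepsilon$ and $\varepsilon + 1$ is the identity, yielding one of the four $Q_\varepsilon^{\pm\pm}$.

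Finally, in the case that $D|_N$ has a global fixed point, I would argue that $\prec$ is one of the bi-orderings. Since $N$ is normal, $\mathrm{Fix}(D|_N)$ is a nonempty closed $G$-invariant set; as the orbit of $0$ is free, $0 \notin \mathrm{Fix}(D|_N)$, so $0$ lies in a complementary open interval permuted nontrivially by the quotient $G/N \cong \langle b \rangle \cong \Z$. On the orbit of $0$ the comparison is therefore governed first by the $\Z$-coordinate $s$ of $a^r b^s$ and only secondarily by $r$, which is precisely the lexicographic prescription of \cref{prop:OrderingFromSES}; tracking the two sign choices recovers $P_\infty^{++}, P_\infty^{+-}, P_\infty^{-+}, P_\infty^{--}$. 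Combining the two branches with the initial normalization of the restricted ordering shows every $\prec$ occurs in the list, completing the proof.
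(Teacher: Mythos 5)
The paper itself does not prove this theorem; it quotes it from Rivas and Deroin--Navas--Rivas, so your proposal has to be measured against the cited proof. Your overall architecture is the same as that proof: distinctness via the dynamics of the affine action, and exhaustiveness via the dynamical realization, split according to whether $N=\langle\langle a\rangle\rangle$ acts with or without a global fixed point (equivalently, whether $N$ is convex in $\prec$), with the fixed-point case yielding the four lexicographic bi-orderings and the fixed-point-free case being recognized, through a H\"older--Plante-type semiconjugacy, as the affine action based at $\varepsilon=\varphi(0)$, followed by the rational/irrational stabilizer analysis. The distinctness half, the convex case, and the endgame (irrational: trivial stabilizer forces $P_\varepsilon^{\pm}$; rational: the cone decomposes as $\{g:\rho(g)(\varepsilon)>\varepsilon\}$ together with one of the two positive cones of $\Stab_\rho(\varepsilon)\cong\Z$, which is detected at $\varepsilon+1$) are all essentially right.

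The genuine gap sits exactly at the step you flag as delicate, and as written the argument there is circular. A smaller issue first: H\"older's theorem in its dynamical form applies to \emph{free} actions, and $D|_N$ is not known to act freely on $\R$ --- only points in the orbit of $0$ have trivial stabilizer; what saves the step is that $\prec|_N$ is (WLOG) the standard order on $\Z[1/n]$ and that $\mathrm{Fix}(D|_N)=\emptyset$ forces the $N$-orbit of $0$ to be unbounded in both directions, so one can build the monotone continuous equivariant surjection $\varphi(x)=\sup\{r\in\Z[1/n]: t(a^r)\le x\}$ by hand. The serious issue is the next step: the relation $ba^rb^{-1}=a^{r/n}$ says that $D(b)$ conjugates $D(a^r)$ to $D(a^{r/n})$ \emph{upstairs}, where these maps are not translations; they become translations only after applying $\varphi$, and nothing in your sketch shows that $D(b)$ descends through $\varphi$, i.e., permutes its fibers. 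Without that descent there is no homeomorphism of the line ``conjugating $T_r$ to $T_{r/n}$'' to which your (correct, and easy) rigidity observation could be applied. Two standard ways to close this: (a) prove uniqueness, up to an additive constant, of monotone continuous surjections intertwining $D|_N$ with the standard translation action --- then $\psi:=n\,\varphi\circ D(b)$ satisfies the same equivariance as $\varphi$, hence $\psi=\varphi+c$, i.e., $\varphi\circ D(b)=(x\mapsto x/n+c/n)\circ\varphi$; or (b) prove that the $N$-action has a unique minimal set $K$ (the closure of the set of endpoints of fibers of $\varphi$, which lies in every orbit closure), so that $D(b)(K)=K$ by normality of $N$, whence $D(b)$ permutes the gaps of $K$, i.e., the fibers of $\varphi$, and descends. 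Either argument is a short lemma, but it is the actual mathematical content of this case: ``the minimal set'' in your sketch must be shown both to exist canonically and to be $D(b)$-invariant before the affineness of the induced map can be extracted.
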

Recall that for a left-orderable group $G$, a faithful action $D:G\to\Homeo_+(\mathbb{R})$ and a dense sequence $x_1,\dots \in \mathbb R$ one can recover an ordering $P_D(x_1,\dots)$ as mentioned after \cref{thm:ghys}. \cref{thm:rivas} tells us that by using the action $\rho$ of $\BS(1,n)$ on $\R$, one can classify all the bi-orderings by considering the first elements $\varepsilon=x_1$ of dense sequences in $\mathbb R$. Thus, given a positive cone $P$ of the form $P^+_\varepsilon$,
$P^-_\varepsilon$, $P^\circ_\infty$, or $Q_\varepsilon^\circ$ where $\circ\in \{++,--,+-,-+\}$ we refer
to $\varepsilon$ as the \emph{base point} of $P$ and $P^+$, $P^-$, $P^\circ$, and $Q^\circ$ as
its type.

We observe that given some $g\in \BS(1,n)$, $\varepsilon \in \R$, and $\circ\in \{+,-, ++, +-, -+, --\}$, we have $T_\varepsilon^\circ = (T_{g(\varepsilon)}^\circ)^{g^{-1}}$, where $T\in \{P,Q\}$. In particular, $\varepsilon$ is rational if and only if $g(\varepsilon)$ is rational and $\Q$ is a countable $\BS(1,n)$-invariant subset. Thus, the conjugation equivalence of the positive cones is Borel equivalent to the orbit equivalence relation of $\BS(1,n) \curvearrowright \R$. 

\begin{theorem}\label{thm:hypfinite}
The orbit equivalence relation $E$ generated by the affine action $\BS(1,n) = \langle a, b \mid b^{-1}ab = a^n\rangle \curvearrowright \R$ via $ a(x) = x+1$ and $b(x) = nx$ is hyperfinite. 
\end{theorem}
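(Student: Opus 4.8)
The plan is to strip away the countable set of points with nontrivial stabiliser and then analyse the free part of the action through the integer ``slope'' recorded by each group element. Since $\Q$ is a $\BS(1,n)$-invariant Borel subset of $\R$ and the restriction $E|_{\Q}$ is a countable Borel equivalence relation on a countable set, $E|_{\Q}$ is smooth and hence hyperfinite. As $\Q$ and $\R\setminus\Q$ are both invariant, it suffices to show that $E|_{\R\setminus\Q}$ is hyperfinite. On the irrationals the action is free (by the stabiliser lemma), so each pair $(x,y)\in E$ has a \emph{unique} representation $y=n^kx+r$ with $k\in\Z$, $r\in\Z[1/n]$, and the assignment $c(x,y)=k$ is a Borel cocycle $c\colon E\to\Z$: from $y=n^kx+r$ and $z=n^{\ell}y+r'$ one gets $z=n^{k+\ell}x+(n^{\ell}r+r')$, so $c(x,z)=c(x,y)+c(y,z)$.

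Next I would show that the kernel of this cocycle is hyperfinite. The set $E_N:=\ker c=\{(x,y):x-y\in\Z[1/n]\}$ is the orbit equivalence relation of the normal translation subgroup $\Z[1/n]$. Writing $\Z[1/n]=\bigcup_j \tfrac1{n^j}\Z$, it is the increasing union of the relations $F_j=\{(x,y):x-y\in\tfrac1{n^j}\Z\}$, and each $F_j$ is the orbit equivalence relation of the single free Borel automorphism ``translation by $1/n^j$'', hence smooth (with fundamental domain $[0,1/n^j)$). Since an increasing union of hyperfinite Borel equivalence relations is hyperfinite, $E_N$ is hyperfinite.

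The main step is to promote hyperfiniteness of $E_N=\ker c$ to hyperfiniteness of $E$ itself, and this is where the specific dynamics must enter: there is no soft ``$\Z$-extension of hyperfinite is hyperfinite'' principle one could invoke. Indeed $a$ is exponentially distorted, and the subgroup generated by $b$ together with any single nontrivial translation is already all of $\BS(1,n)$, so $E$ admits no useful increasing filtration by $b$-invariant hyperfinite pieces. The feature I would exploit is \emph{self-similarity}: scaling by $n$ conjugates the filtration of $E_N$ down one level, namely $(Tx,Ty)\in F_{j-1}\iff (x,y)\in F_j$ for $T(x)=nx$. Concretely, encode each irrational $x$ by its two-sided base-$n$ expansion $\Phi(x)=(d_i(x))_{i\in\Z}$ (finitely many nonzero digits to the left). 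Under $\Phi$ the scaling $T$ becomes the shift $\sigma$, and adding an element of $\Z[1/n]$ alters only finitely many digits, because the carries terminate once they reach the finite integer part. Thus $\Phi$ is a Borel reduction of $E|_{\R\setminus\Q}$ to the shift--tail equivalence relation
\[
\mathcal R=\{(\xi,\eta):\exists\, k\in\Z,\ \sigma^k\xi =^{*}\eta\}
\]
of the full $n$-shift, where $=^{*}$ denotes agreement off a finite set. It then remains to prove $\mathcal R$ hyperfinite. Its kernel under the shift-cocycle is the finite-difference relation $=^{*}$, which is $E_0$, and each $\mathcal R$-class is a $\Z$-indexed tower of $=^{*}$-classes permuted by $\sigma$. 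I would establish hyperfiniteness of $\mathcal R$ by a marker argument adapted to this tower: using the finite-to-one self-similar structure of the shift to select, inside each orbit, a Borel syndetic set of ``cut levels'' that is stable under finite modifications, thereby writing $\mathcal R$ as an increasing union of finite Borel equivalence relations. Equivalently, one may pass to the compact cross-section $D=\{x:1\le|x|<n\}$ for the scaling action, on which $E|_{D}$ becomes a countable Borel equivalence relation generated by circle homeomorphisms, and run the marker construction there where compactness aids the selection.

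The hard part will be exactly this passage from the hyperfinite kernel to $E$, since the naive filtrations collapse and one is forced to use the expansion/self-similarity of the affine action rather than a general closure property; everything before it (the reduction to the free part, the cocycle, and hyperfiniteness of $E_N$) is routine.
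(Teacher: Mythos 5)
Your reduction is, at its core, the same as the paper's: encoding a real by its base-$n$ digit expansion turns the affine orbit equivalence into tail equivalence of digit sequences. The paper does this in one step on all of $\R$, sending $x$ to the base-$n$ expansion of the fractional part $\{x\}$, and checking that $y=n^{-s}x+r$ holds for some group element iff $\{n^py\}=\{n^qx\}$ for some $p,q\in\N$, i.e.\ iff the two expansions are tail equivalent; your two-sided relation $\mathcal{R}$, restricted to sequences with finitely many nonzero digits to the left, is this tail equivalence relation in disguise. Your preliminary detours (splitting off $\Q$, the slope cocycle $c$, hyperfiniteness of $\ker c$) are correct but end up unused by your own final reduction, which never invokes them.

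The genuine gap is that you stop exactly at the crux: hyperfiniteness of $\mathcal{R}$ (equivalently, of the tail equivalence relation $E_t$ on $n^\omega$) is left as a vaguely described marker argument --- ``select a Borel syndetic set of cut levels stable under finite modifications.'' This is precisely the kind of selection that cannot be done naively (any such canonical choice tends to produce a Borel transversal-like object and collides with the non-smoothness of $E_0$), and the actual proof is a delicate argument. Fortunately, it is a known, citable theorem: tail equivalence on $n^\omega$ is hyperfinite by Dougherty--Jackson--Kechris \cite[Section 8]{DJK}, and this citation is exactly how the paper finishes; with it, your proof closes and essentially coincides with the paper's. Two smaller corrections: (i) your justification for hyperfiniteness of $E_N$ appeals to ``an increasing union of hyperfinite Borel equivalence relations is hyperfinite,'' which is not a theorem but the well-known open Union Problem; what you actually have is an increasing union of \emph{smooth} relations, and countable hypersmooth equivalence relations are indeed hyperfinite \cite{DJK}, so that step survives under the correct citation (alternatively, $E_N$ reduces to $E_0$ directly via fractional-part expansions, since adding an element of $\Z[1/n]$ changes only finitely many digits); (ii) your claim that no soft principle promotes hyperfiniteness from $\ker c$ to $E$ is fair, but the lesson of the paper is that one should not route through the kernel at all --- the digit encoding handles the scaling and the translations simultaneously.
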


\begin{proof}
We will reduce $E$ to $E_t$, the tail equivalence relation, defined on $n^\omega$ by $AE_tB$ if $\exists p, q \forall k\ A(p+k) = B(q+k)$. This suffices as the tail equivalence relation is hyperfinite by \cite[Section 8]{DJK}. 

The reduction $f: \R \to n^\omega$ is given by sending $x\in \R$ to the base $n$ expansion of the fractional part $\{x\}$ of $x$. To show this is a reduction, let $x, y\in \R$. Assume first that $x\Elobs y$, so there is some $g\in \BS(1,n)$ such that $g(x) = y$. Assuming $g = a^r b^s$ such that $r \in \Z[1/n]$ and $s \in \Z$, we have $y = g(x) = a^r b^s(x) = n^{-s} x+r$. As $r\in \Z[1/n]$, we can multiply the equation by a power of $n$ to get $n^py  = n^q x + t$, where $p, q \in \N$ and $t \in \Z$. Since $t$ is an integer, we have $\{n^p y\} = \{n^q x\}$. However, in base $n$, $\{n^q x\}$ can be obtained from $\{x\}$ by truncating the first $q$ digits and shifting the decimal point by $q$ places, and similarly for $\{y\}$ and $\{n^p y\}$. Thus, $\{n^p y\} = \{n^q x\}$ implies that $\{x\} \E_t \{y\}$. 

Conversely, assume $\{x\} \E_t \{y\}$ in base $n$. Then there are some $q, p\in \N$ such that for every $k$, the $(q+k)$-th decimal place of $\{x\}$ is the same as the $(p+k)$-th decimal place of $\{y\}$. Thus, we have $\{n^q x\} = \{n^p y\}$, namely, there is some $t\in \Z$ with $n^py  = n^q x + t$, or equivalently $y = n^{q-p} x + t/n^p $. This means $y = g(x)$ with $g = a^{t/n^p}b^{p-q}$, so $x\E y$. This shows that $f$ is indeed a reduction. 
\end{proof}

It follows that $\Elobs$ is hyperfinite, and \cite{calderoni2023} showed that $\Elobs$ is not smooth.

\begin{corollary}
    $\Elobs \sim_B E_0$. 
\end{corollary}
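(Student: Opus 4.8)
The plan is to prove the corollary $\Elobs \sim_B E_0$ by combining two facts. The first is that $\Elobs$ is hyperfinite, which follows immediately from \cref{thm:hypfinite} together with the observation preceding it that the conjugation equivalence relation on positive cones is Borel equivalent to the orbit equivalence relation of $\BS(1,n) \curvearrowright \R$. The second is that $\Elobs$ is not smooth, which is cited from \cite{calderoni2023}. The heart of the argument is then to invoke the dichotomy of Harrington, Kechris, and Louveau for hyperfinite equivalence relations, stated in the excerpt as the theorem of \cite{harrington1990}: every hyperfinite equivalence relation is either bi-reducible with $E_0$ or smooth.

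First I would make explicit that $\Elobs$ is hyperfinite. By \cref{thm:rivas}, every left-ordering of $\BS(1,n)$ has a well-defined base point $\varepsilon$ and type, and by the remark before \cref{thm:hypfinite} the map sending a positive cone to its base point witnesses a Borel equivalence between $\Elobs$ and the orbit equivalence relation $E$ of the affine action on $\R$. Since \cref{thm:hypfinite} establishes that $E$ is hyperfinite, and hyperfiniteness is preserved under Borel bi-reducibility, $\Elobs$ is hyperfinite as well. Some care is warranted here: the correspondence between cones and base points is not quite injective, since each irrational base point carries two cones and each rational base point carries four, and the four bi-orderings sit over the symbol $\infty$ rather than a real number. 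I would note that these finitely many extra cones per base point only alter things by a finite-to-one Borel map, which does not affect hyperfiniteness.

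Next I would record that $\Elobs$ is not smooth, citing \cite{calderoni2023} directly as the excerpt does. With both ingredients in hand, the dichotomy of \cite{harrington1990} forces $\Elobs \sim_B E_0$: being hyperfinite but not smooth, it cannot be bi-reducible with the identity, so the only remaining option in the dichotomy is bi-reducibility with $E_0$.

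The main obstacle I anticipate is not conceptual but bookkeeping: one must be careful that the Borel equivalence between $\Elobs$ and $E$ genuinely transfers hyperfiniteness. The subtlety is that hyperfiniteness is closed downward under Borel reducibility, so it suffices to exhibit a Borel reduction of $\Elobs$ into $E$ (rather than a full bi-reduction). Such a reduction is given by sending a cone to its base point, treating the four $\infty$-cones as a single additional fixed class; verifying this is Borel and respects the two equivalence relations is routine from the definitions of the cones $P_\varepsilon^\pm$, $Q_\varepsilon^\circ$, and $P_\infty^\circ$. Once that is in place, the dichotomy does the rest with no further computation.
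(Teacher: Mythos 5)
Your overall strategy is exactly the paper's: combine the hyperfiniteness of the affine orbit relation $E$ from \cref{thm:hypfinite} with the non-smoothness of $\Elobs$ from \cite{calderoni2023}, and conclude via the dichotomy of \cite{harrington1990}. The paper's proof is precisely this one-line argument. The problem is in the step you dismiss as routine bookkeeping: the map sending a cone to its base point is \emph{not} a Borel reduction of $\Elobs$ to $E$. Conjugation preserves type --- the paper's identity $T^\circ_\varepsilon=(T^\circ_{g(\varepsilon)})^{g^{-1}}$ shows that any conjugate of a type-$\circ$ cone is again of type $\circ$ --- so by \cref{thm:rivas} the cones $P^+_\varepsilon$ and $P^-_\varepsilon$ are distinct and never conjugate, yet your map sends both to $\varepsilon$; the implication $f(P)\E f(Q)\Rightarrow P\Elobs Q$ therefore fails (likewise for the four rational types $Q^\circ_\varepsilon$ over a common base point). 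Your handling of the bi-orderings has the same defect: $P^{++}_\infty,P^{+-}_\infty,P^{-+}_\infty,P^{--}_\infty$ are four \emph{distinct} singleton classes, since each is fixed by conjugation, so collapsing them to ``a single additional fixed class'' again identifies non-equivalent cones. The base-point map is a homomorphism of equivalence relations, not a reduction.

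The gap is real but repairable, and your earlier remark about finite-to-one maps points the right way once made precise. Either: (i) partition $\LO(\BS(1,n))$ into the finitely many conjugation-invariant Borel pieces given by the types ($P^+$ and $P^-$ over $\R\setminus\Q$, the four $Q^\circ$ over $\Q$, and the four fixed bi-orderings); on each piece the base-point map \emph{is} a genuine reduction to $E$, so each piece is hyperfinite, and a finite union of invariant hyperfinite pieces is hyperfinite --- equivalently, track the type and reduce $\Elobs$ to the disjoint sum of finitely many copies of $E$, which is hyperfinite. Or: (ii) observe that the base-point map $f$ is a finite-to-one Borel homomorphism, so $\Elobs$ is a Borel subrelation of the pullback $f^{-1}(E)$, which reduces to $E$ via $f$ and has countable classes; hyperfiniteness then passes down to $f^{-1}(E)$ and to its subrelation $\Elobs$. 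With either fix, the rest of your argument (non-smoothness plus the dichotomy, both of which the paper also invokes) goes through unchanged.
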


\section{Computability of dynamical realizations}\label{sec:computability}

Given a left-ordering of $G$, it is straightforward to see that the left-ordering (considered as a relation on $G$) and the corresponding positive cone (considered as a subset of $G$) are Turing equivalent. It is thus natural to ask if this equivalence extends to the dynamical realization of the left-ordering. We will soon see that this is the case for $\BS(1,n)$.

Towards this fix an enumeration of the dyadic rational numbers $\mathbb
Q_2$ and recall that a real number $r\in \mathbb R$ is
\emph{left-c.e.}\ if its left cut is c.e., i.e., the set $\{ q<r : q\in \mathbb Q_2\}$ is computably enumerable. If both its left cut and right cut are c.e., then we say that it is \emph{computable}. 

\begin{proposition}\label{prop:dynequivcones}
Let $P$ be a left-ordering of $\BS(1,n)$, then $P$ is Turing equivalent to its
base point. 
Furthermore, the reductions are uniform
in the type. 
\end{proposition}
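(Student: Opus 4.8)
The plan is to prove the two directions of the Turing equivalence separately, exhibiting uniform reductions in each direction, and then observe that the uniformity in the type follows from the fact that all the type information is finite.

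First I would show that the base point $\varepsilon$ computes the positive cone $P$. Given oracle access to $\varepsilon$ (as a real, via its left cut of dyadics) and given the type, I want to decide membership $g \in P$ for an arbitrary $g = a^r b^s \in \BS(1,n)$. The key observation is that $\rho(g)(\varepsilon) = n^{-s}\varepsilon + r$, so the sign of $\rho(g)(\varepsilon) - \varepsilon = (n^{-s}-1)\varepsilon + r$ is what determines membership according to \cref{thm:rivas}. For irrational base points the comparison $\rho(g)(\varepsilon) > \varepsilon$ versus $<$ is decidable from the left-c.e.\ and right-c.e.\ approximations of $\varepsilon$ because equality never occurs (the stabilizer of an irrational point is trivial, so $\rho(g)(\varepsilon) \neq \varepsilon$ whenever $g \neq id$), and hence we eventually see a strict inequality in one direction. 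For rational $\varepsilon$ the primary comparison may yield equality, but then $g$ lies in the stabilizer and we fall back to the secondary comparison $\rho(g)(\varepsilon+1)$ versus $\varepsilon+1$, which again resolves because $\varepsilon+1$ is a distinct point; the type (one of $++,+-,-+,--$) tells us which way to break the tie.

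Next I would show the reverse direction, that $P$ computes $\varepsilon$. The idea is to recover $\varepsilon$ as a cut: I want to enumerate, using $P$ as oracle, the dyadic rationals $q$ with $q < \varepsilon$ and those with $q > \varepsilon$. The trick is that for a dyadic $q$ there is a group element $g_q \in \BS(1,n)$ whose affine map $\rho(g_q)$ is, say, $x \mapsto x - q$ followed by a contraction, or more directly an element $g$ for which $\rho(g)(\varepsilon) > \varepsilon \iff \varepsilon < (\text{some dyadic threshold depending on } g)$. Concretely, since $\rho(a^r b^s)(\varepsilon) - \varepsilon = (n^{-s}-1)\varepsilon + r$, choosing $s$ and $r$ appropriately lets us test inequalities of the form $\varepsilon \lessgtr r/(1-n^{-s})$, and the thresholds $r/(1-n^{-s})$ range over a dense set of rationals as $r$ ranges over $\Z[1/n]$ and $s$ over the nonzero integers. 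Querying $P$ on these elements therefore enumerates both cuts of $\varepsilon$, showing $\varepsilon$ is computable from $P$ (and for the rational case, the base point is an actual rational, determined once the cut pins it down together with the secondary data).

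The main obstacle I anticipate is the rational case, where the stabilizer is nontrivial and the primary comparison degenerates to equality on a whole $\Z$'s worth of elements. There the argument must ensure that the secondary comparisons genuinely recover the full base point and type without ambiguity, and that the thresholds used in the reverse direction still isolate a single rational value rather than merely an interval. I expect the forward direction (base point computes $P$) to be essentially routine from the sign-of-an-affine-expression computation together with the trivial-stabilizer fact for irrationals, while the reverse direction requires the mild cleverness of choosing the right family of test elements $g$ to generate a dense set of rational thresholds. Finally, uniformity in the type is immediate because the type is a single element of the four-element set $\{++,+-,-+,--\}$ (or the two-element set $\{+,-\}$ in the irrational case), so a single Turing functional can take the type as a finite parameter.
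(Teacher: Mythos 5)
Your proposal is correct and follows essentially the same route as the paper's proof: one direction decides $g\in P$ by comparing $\varepsilon$ with the rational fixed point $r/(1-n^{-s})$ of $\rho(g)$ (your sign computation, which terminates because irrational points have trivial stabilizer), and the other direction recovers the cuts of $\varepsilon$ by querying $P$ on group elements whose fixed points form a dense set of rational thresholds. Note that the paper likewise treats only the irrational type $P^+$ explicitly and declares the other types ``easily adaptable,'' so the obstacle you flag in the rational case (equality with a given rational threshold is not detectable from a cut oracle, though for rational $\varepsilon$ both $P$ and $\varepsilon$ are outright computable, making the equivalence trivial non-uniformly) is glossed over in the paper as well.
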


\begin{proof}
  We non-uniformly fix the type of $P$. We will assume that the type is $P^+$ with the construction easily adaptable to work for other types. We enumerate a right cut of its base point $\varepsilon$. For every
  $q\in \mathbb{Q}_2$ we enumerate $P$ and whenever we see $g=a^rb^s\in P$ such that
  $\rho(g)(q)=n^{-s}q+r>q$ we enumerate $q$ into $C$. Say $n^{-s}x
  +r>x$, then $x>\frac{-r}{n^{-s}-1}$, and so $\rho(g)(\varepsilon)>\varepsilon$ if
  and only if for every $q>\varepsilon$, $\rho(g)(q)>q$. Thus, $C_q$ is a right cut
  of $\varepsilon$. Similarly, we can enumerate a left cut. 

  Similarly, say we can compute a left cut $L$ and right cut $R$ of $\varepsilon$
  and are given a type $T$. We will give a proof assuming that $T=P^+$. The
  proof can be easily adapted to work for other types. For $q\in Q_2$, we can
  compute whether $g\in T_q$. By the affineness of $\rho$ we have that 
  \[ g\in P^+_\varepsilon \iff (\exists q\in R) \rho(g)(q)>q  \iff (\forall q\in R)
  \rho(g)(q)\geq q.\]
  Hence, we can compute $P^+_\varepsilon$ from its right cut.
\end{proof}

For $G$ a left-orderable computable group and $P$ a computable positive cone of
$G$ the following canonical index sets appear.
\begin{align*}
  I(G)&=\{ e:W_e\text{ is a positive cone}\}\\
  I(P,G)&=\{e: W_e\Elo P\}
\end{align*}
By definition the set $I(G)\in\Pi^0_2$ as membership in a c.e.\ set is
$\Sigma^0_1$. Similarly, it can be seen that $I(P,G)\in \Sigma^0_3$.

\begin{proposition}\label{prop:indexcones}
    Let $G$ be an infinite computable group with a computable left-ordering.
    Then $I(G)$ is $\Pi^0_2$-complete. 
\end{proposition}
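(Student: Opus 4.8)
The plan is to show $I(G)$ is $\Pi^0_2$-complete by establishing both membership and hardness. For membership, I would argue that $e \in I(G)$ iff $W_e$ satisfies the three positive cone axioms: $W_e \cap W_e^{-1} = \varnothing$, $W_e \cdot W_e \subseteq W_e$, and $W_e \cup W_e^{-1} \cup \{id\} = G$. The first two conditions, being ``no bad witness appears,'' are naturally $\Pi^0_1$ and $\Pi^0_2$ respectively (the product closure condition says for all $p,q \in W_e$, eventually $pq \in W_e$). The third condition, that every non-identity element is enumerated into $W_e$ or $W_e^{-1}$, is $\Pi^0_2$ (for all $g \neq id$, there exists a stage at which $g$ or $g^{-1}$ appears). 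Since $\Pi^0_2$ is closed under finite intersection, the conjunction lands in $\Pi^0_2$, recovering the bound already stated in the excerpt.

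For the hardness direction, I would exhibit a many-one reduction from a $\Pi^0_2$-complete set, for instance $\mathrm{Cof} = \{e : W_e \text{ is cofinite}\}$ or more conveniently $\mathrm{Tot} = \{e : \varphi_e \text{ total}\}$, into $I(G)$. The idea is to use the fixed computable left-ordering $P_0$ of $G$ as a ``scaffold'': given an index $e$, I would uniformly construct an index $f(e)$ for a c.e.\ set $W_{f(e)}$ that attempts to enumerate all of $P_0$, but where the enumeration of each element $g \in P_0$ is gated behind a computation $\varphi_e$ converging on some associated input. If the $\Pi^0_2$ condition on $e$ holds (e.g. $\varphi_e$ is total), then $W_{f(e)}$ enumerates exactly $P_0$ and hence is a genuine positive cone, placing $f(e) \in I(G)$; if the condition fails, then some element of $P_0$ is omitted, so $W_{f(e)} \cup W_{f(e)}^{-1} \cup \{id\} \neq G$, violating the totality axiom and forcing $f(e) \notin I(G)$. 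Here I use that $G$ is infinite so that infinitely many gating conditions are genuinely available.

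The main obstacle is arranging the gating so that a \emph{single} failed computation destroys cone-hood without accidentally satisfying it through a different mechanism. In particular, if I merely omit one element $g$ of $P_0$, I must ensure $W_{f(e)}$ does not coincidentally become a different positive cone or violate an axiom in a way that is hard to control; the cleanest fix is to tie each element $g$ to its own computation and to never enumerate $g^{-1}$ for $g \in P_0$, so that failure to enumerate $g$ leaves $g$ out of both $W_{f(e)}$ and $W_{f(e)}^{-1}$, unambiguously breaking totality while the other two axioms hold vacuously or automatically. I would also need to verify the reduction is total and computable, which follows from the $s$-$m$-$n$ theorem, and that $G$ being infinite with a computable ordering gives me a computable infinite supply of distinct positive elements to attach to the computations $\varphi_e(0), \varphi_e(1), \dots$. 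The verification that the intersection and product-closure axioms are automatically preserved under this partial enumeration is the routine part I would defer.
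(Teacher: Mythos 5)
Your proposal is correct and follows essentially the same route as the paper: a many-one reduction from a $\Pi^0_2$-complete index set ($\mathrm{Tot}$ in your case, $\mathrm{Inf}$ in the paper's) which copies the fixed computable cone $P_0$ into $W_{f(e)}$ gated by the $\Pi^0_2$ condition, so that $W_{f(e)} = P_0$ when the condition holds and is otherwise a proper subset of $P_0$, which then fails the axiom $W \cup W^{-1} \cup \{id\} = G$ (the missing element $g$ lies in neither $W$ nor $W^{-1}$, since $g^{-1} \notin P_0$). Note that the verification you defer is actually unnecessary: in the case where you need $W_{f(e)}$ to be a cone you have $W_{f(e)} = P_0$ exactly, and in the other case the failure of the totality axiom alone already disqualifies it, so whether product closure survives is irrelevant.
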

\begin{proof}
  We reduce $\mathrm{Inf}$ to $I(G)$. Fix a computable positive cone $P$ and an
  index $e$ so that $W_e = P$. Given $n$, we build a total computable function
  $f$ so that $W_{f(n)}= P$ if and only if $n\in \mathrm{Inf}$. To do this, we
  enumerate $W_n$ in stages $W_{n,s}$, and, whenever $W_{n,s+1}\neq W_{n,s}$
  with $k=|W_{f(n),s+1}|$, we
  define $W_{f(n),s+1}=W_{e,k}$. The resulting set $W_{f(n)}$ is clearly c.e.\
  and $n\in \mathrm{Inf}$ if and only if $W_{f(n)}=W_e=P$.
\end{proof}
\begin{theorem}\label{thm:indexsets}
  \begin{enumerate}
    \item\label{it:bs1nbi} $I(P^\circ_\infty,\BS(1,n))$ is $\Pi^0_2$-complete for every $\circ\in
      \{++,--,+-,-+\}$.
    \item\label{it:bs1nlo} $I(P^\circ_\varepsilon,\BS(1,n))$ is
      $\Sigma^0_3$-complete for every computable $\varepsilon\in
      \mathbb{R}\setminus\mathbb{Q}$ and $\circ\in \{+,-\}$.
  \end{enumerate}
\end{theorem}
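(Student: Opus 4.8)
The plan is to treat the two parts separately, using in both cases that by \cref{thm:rivas} together with the identity $T^\circ_\varepsilon=(T^\circ_{g(\varepsilon)})^{g^{-1}}$ recorded just before \cref{thm:hypfinite} the $\Elobs$-class of a cone is completely transparent. For the four bi-orderings this identity (equivalently, the conjugation-invariance noted after \cref{prop:OrderingFromSES}) shows that $\{P : P \Elobs P^\circ_\infty\}=\{P^\circ_\infty\}$, so that $I(P^\circ_\infty,\BS(1,n))=\{e : W_e=P^\circ_\infty\}$. For the orderings with irrational base point the same identity shows that the class of $P^\circ_\varepsilon$ (for $\circ\in\{+,-\}$) is exactly $\{P^\circ_\delta : \delta \text{ in the } \rho\text{-orbit of }\varepsilon\}$, an effective list of computable cones indexed by the (computable) affine orbit of $\varepsilon$.

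For part \eqref{it:bs1nbi} the upper bound is immediate: since $P^\circ_\infty$ is a fixed computable set, $W_e=P^\circ_\infty$ is the conjunction of $W_e\subseteq P^\circ_\infty$ (which is $\Pi^0_1$) and $P^\circ_\infty\subseteq W_e$ (which is $\Pi^0_2$), so $I(P^\circ_\infty,\BS(1,n))\in\Pi^0_2$. For hardness I would reduce $\mathrm{Inf}=\{e : W_e\text{ is infinite}\}$ exactly as in \cref{prop:indexcones}: given $n$, enumerate $P^\circ_\infty$ into $W_{f(n)}$, releasing one new element of $P^\circ_\infty$ each time $W_n$ grows. Then $W_{f(n)}=P^\circ_\infty$ if $W_n$ is infinite, whereas if $W_n$ is finite then $W_{f(n)}$ is a proper subset of $P^\circ_\infty$; and a proper subset of a positive cone is never a positive cone, since if $g\in P^\circ_\infty\setminus W_{f(n)}$ then neither $g$ nor $g^{-1}$ lies in $W_{f(n)}$, violating $W_{f(n)}\cup W_{f(n)}^{-1}\cup\{id\}=\BS(1,n)$. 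Hence $W_{f(n)}\Elobs P^\circ_\infty$ iff $n\in\mathrm{Inf}$, and $I(P^\circ_\infty,\BS(1,n))$ is $\Pi^0_2$-complete.

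For part \eqref{it:bs1nlo}, membership in $\Sigma^0_3$ is the general bound already recorded for $I(P,G)$, using that $P^\circ_\varepsilon$ is computable by \cref{prop:dynequivcones} since $\varepsilon$ is computable. The substance is $\Sigma^0_3$-hardness. I would reduce the $\Sigma^0_3$-complete set $S=\{e : \exists i\ W_{h(e,i)}\text{ is infinite}\}$. Fix a computable injection $i\mapsto\delta_i$ of $\N$ into the $\rho$-orbit of $\varepsilon$ with $\delta_i\to\delta_\infty$ for some irrational $\delta_\infty$ \emph{not} in the orbit (possible, as the orbit is a countable dense set of irrationals). The aim is to build a c.e.\ set $W_{f(e)}$ so that if some $W_{h(e,i)}$ is infinite and $i^\ast$ is the least such index then $W_{f(e)}=P^\circ_{\delta_{i^\ast}}$ (an orbit cone, so $f(e)\in I(P^\circ_\varepsilon,\BS(1,n))$), and otherwise $W_{f(e)}=P^\circ_{\delta_\infty}$ (a cone of the wrong orbit) or a proper subset of it (not a cone at all), so $f(e)\notin I(P^\circ_\varepsilon,\BS(1,n))$. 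A witness $i$ ``drives'' the construction towards $\delta_i$ by releasing successive blocks of $P^\circ_{\delta_i}$ as $W_{h(e,i)}$ grows, with priority given to smaller $i$.

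The hard part is reconciling this priority bookkeeping with the monotonicity of c.e.\ enumeration. By \cref{thm:rivas} distinct cones are incomparable, so two targets $\delta_i\neq\delta_j$ disagree on infinitely many group elements, and an element enumerated on behalf of an abandoned target can never be deleted. My main tool would be continuity of $\delta\mapsto P^\circ_\delta$ at irrationals: for fixed $g=a^rb^s$ the inequality $\rho(g)(\delta)>\delta$ is locally constant in $\delta$ away from the unique (rational) fixed point of $\rho(g)$, so $P^\circ_{\delta_i}\to P^\circ_{\delta_\infty}$ pointwise, and one only enumerates elements lying in $P^\circ_\delta$ for \emph{every} target still viable at the current stage. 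The twin dangers this must defeat are exactly where I expect the work to concentrate: \emph{overshoot}, where a witness $i>i^\ast$ or a false witness contributes an element outside $P^\circ_{\delta_{i^\ast}}$, which (its inverse already being present) destroys antisymmetry; and \emph{undershoot}, where the safety restriction makes the winning witness omit the elements of $P^\circ_{\delta_{i^\ast}}$ specific to $\delta_{i^\ast}$, leaving a proper subcone that is not a cone. Arranging the block schedule and the true-stage priority so that the construction eventually commits to precisely the elements of the eventual winner $\delta_{i^\ast}$ while provably confining all spurious contributions to symmetric differences already contained in $P^\circ_{\delta_{i^\ast}}$ — so that the final set equals $P^\circ_{\delta_{i^\ast}}$ \emph{exactly}, rather than merely containing or being contained in it — is the delicate heart of the argument, and the step I expect to be the main obstacle.
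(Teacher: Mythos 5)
Part \cref{it:bs1nbi} of your proposal is correct and is essentially the paper's argument: since the four bi-orderings are fixed by conjugation, $W_e\Elobs P^\circ_\infty$ iff $W_e=P^\circ_\infty$, and the $\mathrm{Inf}$-reduction of \cref{prop:indexcones} applies verbatim.

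Part \cref{it:bs1nlo} is where the gap lies, and it is not merely the unfinished ``delicate heart'' you flag: with your setup that heart cannot be supplied at all. Your specification fixes a computable sequence $\delta_0,\delta_1,\dots$ of orbit points in advance and demands that $W_{f(e)}=P^\circ_{\delta_{i^*}}$ whenever $i^*$ is the least index with $W_{h(e,i^*)}$ infinite. No enumeration procedure can satisfy this. Indeed, feed the construction an input in which only $W_{h(e,0)}$ grows. If your specification held, then in the run where $W_{h(e,0)}$ is infinite the output would be $P^\circ_{\delta_0}$; since cones with distinct irrational base points are incomparable (any non-identity $g$ whose unique fixed point under $\rho$ lies strictly between $\delta_0$ and $\delta_1$ belongs to exactly one of $P^\circ_{\delta_0}$ and $P^\circ_{\delta_1}$, and elements of both kinds exist), at some finite stage $s_1$ the construction must enumerate some $g\notin P^\circ_{\delta_1}$. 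Now freeze $W_{h(e,0)}$ after stage $s_1$ and let $W_{h(e,1)}$ be infinite instead. The least infinite witness is now $1$, so the specification demands $W_{f(e)}=P^\circ_{\delta_1}$, yet $g\in W_{f(e)}\setminus P^\circ_{\delta_1}$ can never be removed; once $g^{-1}\in P^\circ_{\delta_1}$ is also enumerated, the output is not even a positive cone. So the overshoot/undershoot tension you identify is an impossibility for \emph{any} fixed list of targets, not a scheduling problem, and nothing in the plan secures even the weaker conclusion $W_{f(e)}\Elobs P^\circ_\varepsilon$ in this scenario, since the output mixes commitments made on behalf of the two incomparable cones.

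The paper's construction succeeds precisely because it gives up both features of your setup: the base points are chosen \emph{adaptively}, and the object enumerated is (the left cut of) a single real number rather than cones attached to witnesses. In the paper, a requirement, upon (re-)initialization, adopts as its target the current higher-priority target perturbed by $-n^{-r}$ for a fresh, very large $r$, kept above the part of the cut already enumerated; hence everything a lower-priority requirement contributes is automatically consistent with every higher-priority candidate, and each time the true winner acts it re-initializes all lower-priority requirements, confining their interference to ever-shrinking neighborhoods of the winner's target. In the limit, if some witness is infinite the resulting $\delta$ differs from $\varepsilon$ in only finitely many base-$n$ digits, so $\{\delta\}$ and $\{\varepsilon\}$ are tail-equivalent and $P^\circ_\delta\Elobs P^\circ_\varepsilon$ by the proof of \cref{thm:hypfinite}; if all witnesses are finite, the infinitely many perturbations are placed (using the pairs $k_y=\langle p,q\rangle$) so that $n^p\delta\neq n^q\varepsilon$ for all $p,q$, hence $P^\circ_\delta\not\Elobs P^\circ_\varepsilon$ by the same tail-equivalence criterion. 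This adaptivity of targets, together with working on the cut of the base point and transferring to cones via \cref{prop:dynequivcones}, is the idea your proposal is missing.
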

\begin{proof}
    The proof for \cref{it:bs1nbi} is analogous to the proof of
    \cref{prop:indexcones}.
    

    We prove the $\Sigma^0_3$-hardness of
    $I(P^\circ_\varepsilon, \BS(1,n))$ for a fixed computable $\varepsilon\in
    \mathbb R\setminus\mathbb{Q}$. That $I(P^\circ_\varepsilon,\BS(1,n))\in
    \Sigma^0_3$ follows easily from the definition. Given a $\Sigma^0_3$ set $S$
    we may assume that there is a computable function $g: \omega^2\to \omega$ such that
    \[ n\in S \iff \exists y W_{g(n,y)}\in \mathrm{Inf}.\]
    We may also assume that for every $k\in\omega$ there are infinitely many $y$ with $|W_{g(n,y)}|>k$. 
    
    Given $n$, we will define a left cut $C_{f(n)}$ in stages as follows.
    We let $C_{f(n)}^0=\emptyset$ and at every stage $s$ we choose some $y<s$ and
    extend $C_{f(n)}^s$ with the goal to make $C_{f(n)}$ a left cut of a real number
    $\delta_y$. Furthermore, every $y$ will define natural numbers
    $k_y$ and $l_y$ when it acts at a stage $s$.
    Every $\delta_y$ will differ from $\varepsilon$ at only finitely many digits in their base $n$ expansions and
    $\delta_0=\varepsilon$ with $k_0=l_0=0$. The construction is a classical finite injury construction where
    higher priority ``requirements'' (i.e., smaller $y$) will initialize the
    work of larger $y$. At the start of the construction, all $y$ are
    initialized.

    Assume we are at stage $s+1$ and that $y_0$ is least with
    $W_{g(x,y_0),s+1}\neq W_{g(x,y_0),s}$. Let $y_1$ be the $y$ that acted at
    stage $s$. If $y_0$ has been initialized since it last acted or has never acted before, do the following:
    
    Set $k_{y_0}=k_{y_1}+1=\langle p,q\rangle$. By induction we can assume
    that $\{n^r\delta_{y_1}\}=\{n^r\varepsilon\}$ for some $r\in \omega$. If $p\neq
      q$, then since $\varepsilon$ is irrational $n^p\delta_{y_1}\neq
      n^q\varepsilon$. Hence, there must be some least $i>r$ such that
      the $(p+i)$-th bit of $\{\delta_{y_1}\}$ is not equal to the $(q+i)$th bit
      of $\varepsilon$. We declare $\delta_{y_0}=\delta_{y_1}$ and $l_{y_0}=\max\{
      l_{y_1}, p+i\}$.

      Now, if $p=q$, then we consider the least $r>\max\{ p, l_{y_1}\}$ such
    that $\delta_{y_1}-n^{-r}\in (\max C_{f(n)}^s,\delta_{y_1})$ and the $r$-th decimal place of $\delta_{y_1}$ is nonzero, let
    $\delta_{y_0}=\delta_{y_1}-n^{-r}$ and set $l_{y_0}=r$. Note that the first $r-1$ decimal digits of $\delta_{y_0}$ and $\delta_{y_1}$ are the same. 

    At last, no matter if $y_0$ was initialized at the start of this stage or not, let
    \[C_{f(n)}^{s+1}=C_{f(n)}^s\cup \{ q_i\} \] where $q_i\in \mathbb Q_2$ is
    least in an enumeration of $\mathbb Q_2$ with $q_i<\delta_{y_0}$ and
    initialize all lower priority requirements. This finishes the construction.

    Let $C_{f(n)}=\lim_s C_{f(n)}^s$. We claim that this is a left cut. Indeed, if $q \in C_{f(n)}$ at some stage $s_0$, then we have $\delta_y[s] > \max(C_{f_n}^s) \ge q$ for $y\in \omega$ and $s > s_0$.\footnote{Following Lachlan
  $\delta_{y}[s]$ is the value of $\delta_y$ at stage $s$.} As infinitely often some $y$ will act, any $q' < q$ will eventually be enumerated into $C_{f(n)}$, making it a left cut. 
    
    If $n\in S$, let $y_0$ be the least such that
    $W_{g(n,y_0)}\in\mathrm{Inf}$. 
    Then there is a stage $s_0$ such that $y_0$ is never initialized again. 
    As every $y>y_0$ defines $\delta_{y}[s]\leq\delta_{y_0}$ at every $s>s_0$
    and $y_0$ acts infinitely often $\delta=\delta_{y_0}$ has left cut $C_{f(n)}$. Furthermore, as mentioned
    in the construction, $\{n^r\delta_{y_0}\}=\{n^r\varepsilon\}$ for some
    $r \in \omega$. From \cref{prop:dynequivcones} we get a positive cone $P^\circ_\delta$ and as can be seen in the proof of \cref{thm:hypfinite}, $P^\circ_\delta \Elobs P^\circ_\varepsilon$.
    
    On the other hand, suppose $n\notin S$, so that all $W_{g(n,y)}$ are finite. Given $y$, let
    $s_y$ be the stage after which $y$ is never initialized again. Then, by the
    construction $C_{f(n)}$ is the left cut for $\delta=\lim_y \delta_{y}[s_y]$
    and $\{ k_y[s_y]: y\in\omega\}=\omega$. Thus, there cannot be $p,q$ such that
    $n^p\delta=n^q\varepsilon$. This is the case because if $k_{y}[s_y]=\langle
    p,q\rangle$, then it is ensured at stage $s_y$ that $n^p\delta\neq n^q\varepsilon$
since $\{\delta\}\restrict l_y[s_y]=\{\delta_{y}[s_y]\}\restrict l_y[s_y]$
    and already this finite part witnessed that $n^p\delta\neq
    n^q\varepsilon$.
    Hence, $\{ \delta\}\not\E_t \{\varepsilon\}$ and for the positive cone $P^\circ_\delta$ we get from \cref{prop:dynequivcones}, $P^\circ_\delta\not\Elobs P^\circ_\varepsilon$.
  \end{proof}
  \begin{question}
    What is the complexity of $I(Q_\varepsilon^\circ, \BS(1,n))$ for
    $\varepsilon\in \mathbb Q$ and $\circ\in \{++,--,+-,-+\}$?
  \end{question}

%
%
%
  \bibliographystyle{plain}
  \bibliography{mybib}
\end{document}